\newcommand{\divides}{\mid}
\DeclareMathOperator{\ord}{ord}
\DeclareMathOperator{\dime}{dim}
\DeclareMathOperator{\pideg}{PI-deg}
\DeclareMathOperator{\gcdi}{gcd}
\DeclareMathOperator{\lcmu}{lcm}
\DeclareMathOperator{\tors}{tor}
\DeclareMathOperator{\ran}{rank}
\DeclareMathOperator{\mo}{mod}
\numberwithin{equation}{section}
\newtheorem{theo}{Theorem}[section]
\newtheorem{lemm}[theo]{Lemma}
\newtheorem{rema}[theo]{Remark}
\newtheorem{coro}[theo]{Corollary}
\newtheorem{prop}[theo]{Proposition}
\title{On Simple Modules over the Quantum Matrix algebra at roots of unity}
\author{Sanu Bera \ \ Snehashis Mukherjee}
\begin{document}

\maketitle
\begin{abstract}
This article investigates the two-parameter quantum matrix algebra at roots of unity. 
In the roots of unity setting, this algebra becomes a Polynomial Identity (PI) algebra and it is known that simple modules over such algebra are finite-dimensional with dimension at most the PI degree. We determine the center, compute the PI degree, and classify simple modules for two-parameter quantum matrix algebra, up to isomorphism, over an algebraically closed field of arbitrary characteristics.
\end{abstract}
\section{{Introduction}}
Let $\mathbb{K}$ be a field and $\mathbb{K}^*$ denote the multiplicative group $\mathbb{K}\setminus\{0\}$. Let $\alpha,\beta\in \mathbb{K}^*$ be two quantum parameters. The two-parameter quantum matrix algebra $M_n(\alpha,\beta)$ is an associative algebra over $\mathbb{K}$ generated by $\{X_{ij}\}_{1\leq i,j\leq n}$ subject to the defining relations
\begin{align*}
X_{ij}X_{ik}&=\alpha X_{ik}X_{ij} \hspace{.3cm} \text{if}\  k<j\\
X_{ij}X_{kj}&=\beta X_{kj}X_{ij}\hspace{.3cm} \text{if}\ k<i\\
X_{ij}X_{st}&=\beta\alpha^{-1}X_{st}X_{ij} \hspace{.3cm} \text{if}\ s<i,\ j<t\\
X_{ij}X_{st}-X_{st}X_{ij}&=(\beta-\alpha^{-1}) X_{sj}X_{it} \hspace{.3cm} \text{if}\ s<i,\ t<j.
\end{align*}
This algebra was introduced in 1990 by Takeuchi \cite{take} to construct a two-parameter quantization of the algebraic group $GL_n$ of invertible $n\times n$ matrices. Note that when $\alpha=\beta=1$, this algebra becomes the coordinate ring of the variety of $n\times n$ matrices and thus we can view it as a deformation of this coordinate ring. For $q\in \mathbb{K}^*$, we can recover the single parameter quantum matrix algebras $\mathcal{O}_q(M_n)$ introduced by Faddeev, Reshetikhin and Takhtajan \cite{manin, par}, and $\mathcal{M}at_n(q)$ introduced by Dipper and Donkin \cite{dip}, from the above definition as a special case by setting $(q,q)$ and $(1,q)$ as $(\alpha,\beta)$, respectively. It is worth noting that since then, various authors have introduced more multi-parameter quantizations of general linear groups \cite{res, sud}, with Takeuchi's two-parameter group being a special case of these. It is well-known that $M_n(\alpha,\beta)$ is Artin-Schelter regular of global and GK dimension $n^2$ (cf. \cite[Theorem 2]{ast}). Moreover $M_n(\alpha,\beta)$ may be presented as an iterated skew polynomial algebra twisted by automorphisms and derivations (cf. \cite[Theorem I.2.7]{brg}). These single-parameter quantum matrix algebras have been studied extensively for automorphism group \cite{ac, my,gl}, isomorphism problem \cite{g}, center \cite{nym,jkjz,ddjj}, prime and primitive spectrum \cite{gole,gc, ll, bln}, and representations \cite{dj,cj,gls,sbsm} in both generic and roots of unity setting. In this article, we wish to study the two-parameter quantum matrix algebra $M_2(\alpha,\beta)$ in which the deformation parameters are roots of unity. 
\par For a root of unity $\xi\in\mathbb{K}^*$, we write $\ord(\xi)$ to denote the order of $\xi$ as an element in the group $\mathbb{K}^*$, i.e., $\ord(\xi):=\min\{k\in\mathbb{Z}_{+}:\xi^k=1\}$. In the roots of unity setting, the algebra $M_n(\alpha,\beta)$ becomes a finitely generated module over its center and thus a Polynomial Identity (PI) algebra. In \cite{gl}, Gaddis and Lamkin explored PI quantum matrix algebras $M_n(\alpha,\beta)$ and their automorphisms using the noncommutative discriminant. For the two-parameter case with $n=2$, they demonstrated that all automorphisms are graded when the center is a polynomial ring, given the assumption that $\ord(\alpha)$ and $\ord(\beta)$ are relatively prime. Importantly, we establish that the center of $M_2(\alpha,\beta)$ can be a polynomial algebra without assuming the orders are relatively prime, see Corollary \ref{cenpoly}.
\par The theory of PI algebras serves as a pivotal tool in analyzing algebraic structure of $M_n(\alpha,\beta)$, with the PI degree standing out as a fundamental invariant. In the case of a prime affine PI algebra over an algebraically closed field, the PI degree bounds the $\mathbb{K}$-dimension of each simple module \cite[Theorem I.13.5]{brg} and this bound is attained by the algebra \cite[Lemma III.1.2]{brg}. Since the PI degree provides insight into simple $M_2(\alpha,\beta)$-modules, we will determine the PI degree of $M_2(\alpha,\beta)$, as shown in Theorem \ref{pithm}. 
\par In \cite{kar}, Karimipour explores the irreducible representations of the quantum matrix algebra $M_{\mathrm{q},\mathrm{p}}(2)$ with deformation parameters $\mathrm{p}$ and $\mathrm{q}$, where $\alpha=(\mathrm{pq})^{-1}$ and $\beta=\mathrm{p}\mathrm{q}^{-1}$ in our context. It was established in \cite{kar} that if $\ord(\mathrm{p})=\ord(\mathrm{q})=r$ (say) is odd, the so-called \enquote{toroidal}, \enquote{semitoroidal} and \enquote{cylindrical} representations are irreducible representations of dimension $r^2$. However, our article argues that this result is not generally true when $\mathrm{p}$ and $\mathrm{q}$ are two distinct roots of unity (see Corollary \ref{corogcd}). Consequently, the classification of simple modules over this algebra remains incomplete.  In this article, we continue the study of simple modules over $M_2(\alpha,\beta)$ and aim to provide a complete classification assuming $\alpha$ and $\beta$ are roots of unity. 
\par Throughout the article, all modules under consideration are the right modules, $\mathbb{K}$ is an algebraically closed field of arbitrary characteristic. The article is organized as follows. Section \ref{secpre} presents key findings concerning the algebra $M_2(\alpha,\beta)$ and delves into preliminary results regarding Polynomial Identity algebras. In Section \ref{seccen}, we determine the center of the algebra $M_2(\alpha,\beta)$ under the assumption that $\ord(\alpha\beta)=\ord(\alpha\beta^{-1})$, which includes the previous study by Gaddis where $\ord(\alpha)$ and $\ord(\beta)$ are relatively prime. Section \ref{secpideg} addresses the computation of PI degree for quantum matrix algebra $M_2(\alpha,\beta)$. Our method leverages the derivation erasing process with the De Concini-Processi algorithm to determine the PI degree. In Section \ref{seccon}, we construct three non-isomorphic simple $M_2(\alpha,\beta)$-modules: $\mathscr{V}_1(\underline{\mu}),\mathscr{V}_2(\underline{\mu})$ and $\mathscr{V}_3(\underline{\mu})$. In Section \ref{seccla} we classify simple modules over $M_2(\alpha,\beta)$ by considering the actions of certain normal or central elements and then prove that any $X_{12}, X_{21}$ and $D$-torsionfree simple $M_2(\alpha,\beta)$-module is isomorphic to one of the above three simple modules and consequently provide a necessary and sufficient condition for a module to be maximal dimensional. Finally, in Section \ref{seciso} we determine the criteria under which two simple modules in the same class are isomorphic, thereby completing the classification problem for simple modules over $M_2(\alpha,\beta)$.
\section{Preliminaries}\label{secpre}
\subsection{Torsion and Torsionfree Modules} Let $A$ be a $\mathbb{K}$-algebra and $M$ be a right $A$-module and $S\subset A$ be a right Ore set. The submodule \[\tors_{S}(M)=\{m\in M:ms=0\ \text{for some}\ s\in S\}\] is called the $S$-torsion submodule of $M$. The module $M$ is said to be $S$-torsion if $\tors_{S}(M)=M$ and $S$-torsionfree if $\tors_{S}(M)=0$. If Ore set $S$ is generated by a single element $x\in A$, then we say that the $S$-torsion/torsionfree module $M$ is $x$-torsion/torsionfree.
\par A nonzero element $x$ of an algebra $A$ is called normal element if $xA=Ax$. Observe that if $x\in A$ is a normal element, then the set $S=\{x^k:k\geq 0\}$ is an Ore set generated by $x$. The following lemma is obvious.
\begin{lemm}\label{normalaction}
    Suppose that $A$ is an algebra, $x\in A$ is a normal element and $M$ is a simple $A$-module. Then either $Mx=0$ (if $M$ is $x$-torsion) or the map $x_{M}:M\rightarrow M$ given by $m\mapsto mx$ is an isomorphism (if $M$ is $x$-torsionfree). 
\end{lemm}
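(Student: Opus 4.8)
The plan is to use the normality of $x$ to exhibit two $A$-submodules of $M$ and then invoke simplicity. First I would note that $Mx$ is an $A$-submodule of $M$: given $m\in M$ and $a\in A$, since $xa\in xA=Ax$ we may write $xa=bx$ for some $b\in A$, whence $(mx)a=m(xa)=(mb)x\in Mx$. As $M$ is simple, either $Mx=0$ or $Mx=M$.

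In the case $Mx=0$, every $m\in M$ satisfies $mx=0$, so $M$ is $x$-torsion (note that $x$ lies in the Ore set $S=\{x^k:k\geq 0\}$) and the first alternative of the lemma holds. In the case $Mx=M$, the map $x_{M}$ is surjective, and I would next check that $\ker x_{M}=\{m\in M:mx=0\}$ is an $A$-submodule: using normality in the other direction, for $a\in A$ we have $ax\in Ax=xA$, say $ax=xc$ with $c\in A$, so if $mx=0$ then $(ma)x=m(ax)=(mx)c=0$. Since $M$ is simple and nonzero, $\ker x_{M}\neq M$ (otherwise $Mx=0\neq M$), hence $\ker x_{M}=0$ and $x_{M}$ is injective, therefore bijective. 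Moreover $M$ is then $x$-torsionfree: from $mx^{k}=0$, applying injectivity of $x_{M}$ successively to $mx^{k-1},\dots,m$ forces $m=0$.

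As the paper observes, the argument is essentially immediate, so I expect no real obstacle; the only point worth a remark is the sense in which $x_{M}$ is an isomorphism. It is at any rate a $\mathbb{K}$-linear bijection of $M$, and --- when $x$ is a regular element, as in the intended applications --- it is a semilinear module automorphism intertwining the $A$-action with the action twisted by the conjugation automorphism $\tau$ given by $ax=x\tau(a)$, since $x_{M}(ma)=max=m\,x\tau(a)=x_{M}(m)\tau(a)$; this is all that will be needed in the sequel.
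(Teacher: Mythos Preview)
Your proof is correct and is the standard argument; the paper itself provides no proof, simply declaring the lemma ``obvious,'' so your submodule argument via $Mx$ and $\ker x_{M}$ is exactly what the authors have in mind. Your closing remark on the semilinear nature of $x_{M}$ is a useful clarification not made explicit in the paper.
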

\subsection{Skew Polynomial Presentation} Recall that the algebra $M_2(\alpha,\beta)$ is an $\mathbb{K}$-algebra generated by $X_{11},X_{12},X_{21}$ and $X_{22}$ together with the relations
\begin{align*}
X_{12}X_{11}&=\alpha X_{11}X_{12} & X_{22}X_{21}&=\alpha X_{21}X_{22} \\
X_{21}X_{11}&=\beta X_{11}X_{21} & X_{22}X_{12}&=\beta X_{12}X_{22}\\
X_{21}X_{12}&=\beta \alpha^{-1}X_{12}X_{21}& X_{22}X_{11}-X_{11}X_{22}&=(\beta-\alpha^{-1})X_{12}X_{21}
\end{align*}
The algebra $M_2(\alpha,\beta)$ has an iterated skew polynomial presentation to the ordering of the variables $X_{11},X_{12},X_{21}$ and $X_{22}$ of the form
\begin{equation}\label{spp}
    \mathbb{K}[X_{11}][X_{12},\sigma_{12}][X_{21},\sigma_{21}][X_{22},\sigma_{22},\delta_{22}]
\end{equation}
where the $\sigma_{12},\sigma_{21}$ and $\sigma_{22}$ are $\mathbb{K}$-linear automorphisms and the $\delta_{22}$ is $\mathbb{K}$-linear $\sigma_{22}$-derivation such that
\begin{align*}
    \sigma_{12}(X_{11})&=\alpha X_{11}& \sigma_{21}(X_{11})&=\beta X_{11}& \sigma_{21}(X_{12})&=\beta \alpha^{-1}X_{12}\\
    \sigma_{22}(X_{11})&=X_{11}& \sigma_{22}(X_{12})&=\beta X_{12}& \sigma_{22}(X_{21})&=\alpha X_{21}\\
    \delta_{22}(X_{11})&=(\beta-\alpha^{-1})X_{12}X_{21}& \delta_{22}(X_{12})&=0& \delta_{22}(X_{21})&=0.
\end{align*}
Thus the algebra $M_2(\alpha,\beta)$ is a prime affine noetherian domain and the family of ordered monomials $\{X_{11}^{a}X_{12}^{b}X_{21}^{c}X_{22}^{d}: a,b,c,d\geq 0\}$ forms a $\mathbb{K}$-basis. The defining relations ensure that the elements $X_{12}$ and $X_{21}$ are normal. The quantum determinant $D$ is defined by \[D:=X_{11}X_{22}-\alpha^{-1}X_{12}X_{21}=X_{22}X_{11}-\beta X_{12}X_{21}.\] Now we can verify the following commutation relations
\[DX_{11}=X_{11}D,\ DX_{12}=\alpha^{-1}\beta X_{12}D,\ DX_{21}=\alpha\beta^{-1} X_{21}D,\ DX_{22}=X_{22}D.\]
Therefore $D$ is also normal element in $M_2(\alpha,\beta)$. These normal elements will play a crucial role in classifying simple modules. Note that $D$ is central in the single parameter case where $\alpha=\beta$.
\begin{lemm}\label{crelation}
    The following identities hold in $M_2(\alpha,\beta)$:
    \begin{itemize}
        \item [(1)] $X_{22}^kX_{11}=X_{11}X_{22}^k+\alpha^{-1}[(\alpha\beta)^k-1]X_{12}X_{21}X_{22}^{k-1}$.
        \item [(2)] $X_{22}X_{11}^k=X_{11}^kX_{22}+\beta[1-(\alpha\beta)^{-k}]X_{12}X_{21}X_{11}^{k-1}$.
    \end{itemize}
\end{lemm}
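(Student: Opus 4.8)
The plan is to prove both identities by induction on $k$, after recording the way the product $X_{12}X_{21}$ interacts with $X_{11}$ and $X_{22}$. From $X_{22}X_{12}=\beta X_{12}X_{22}$ and $X_{22}X_{21}=\alpha X_{21}X_{22}$ one gets $X_{22}(X_{12}X_{21})=\alpha\beta\,(X_{12}X_{21})X_{22}$, and hence $X_{22}\bigl(X_{12}X_{21}X_{22}^{k-1}\bigr)=\alpha\beta\,X_{12}X_{21}X_{22}^{k}$ for all $k\ge 1$. Dually, from $X_{12}X_{11}=\alpha X_{11}X_{12}$ and $X_{21}X_{11}=\beta X_{11}X_{21}$ one gets $(X_{12}X_{21})X_{11}=\alpha\beta\,X_{11}(X_{12}X_{21})$, so that $X_{11}^{k}(X_{12}X_{21})=(\alpha\beta)^{-k}(X_{12}X_{21})X_{11}^{k}$. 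Together with the defining relation $X_{22}X_{11}-X_{11}X_{22}=(\beta-\alpha^{-1})X_{12}X_{21}$ and the elementary identity $\beta-\alpha^{-1}=\alpha^{-1}(\alpha\beta-1)=\beta\bigl(1-(\alpha\beta)^{-1}\bigr)$, these are all the ingredients needed.

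For (1), the base case $k=1$ is precisely the defining relation rewritten with $\beta-\alpha^{-1}=\alpha^{-1}(\alpha\beta-1)$ and $X_{22}^{0}=1$. For the inductive step I would multiply the identity for $k$ on the left by $X_{22}$, move $X_{22}$ past $X_{11}$ via the defining relation and past $X_{12}X_{21}X_{22}^{k-1}$ via the skew-centrality above, then collect the two resulting multiples of $X_{12}X_{21}X_{22}^{k}$; it remains to check that the scalar
\[
(\beta-\alpha^{-1})+\alpha\beta\cdot\alpha^{-1}\bigl[(\alpha\beta)^{k}-1\bigr]
\]
simplifies to $\alpha^{-1}\bigl[(\alpha\beta)^{k+1}-1\bigr]$, which is immediate.

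For (2), the base case is again the defining relation, now using $\beta-\alpha^{-1}=\beta\bigl(1-(\alpha\beta)^{-1}\bigr)$. For the inductive step I would multiply the identity for $k$ on the right by $X_{11}$, use the defining relation to handle the $X_{11}^{k}X_{22}X_{11}$ term and the relation $X_{11}^{k}(X_{12}X_{21})=(\alpha\beta)^{-k}(X_{12}X_{21})X_{11}^{k}$ to reorder the error term, and then verify that
\[
(\beta-\alpha^{-1})(\alpha\beta)^{-k}+\beta\bigl[1-(\alpha\beta)^{-k}\bigr]=\beta\bigl[1-(\alpha\beta)^{-(k+1)}\bigr].
\]
I do not anticipate any real obstacle here; the only care required is bookkeeping of the scalars in $\mathbb{K}^{*}$ — repeatedly invoking $\beta(\alpha\beta)^{-1}=\alpha^{-1}$ — and making sure the exponent of $X_{22}$ (respectively $X_{11}$) in the error term is tracked correctly after each multiplication.
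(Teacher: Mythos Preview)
Your proposal is correct and follows exactly the approach indicated in the paper, which simply states that the assertions follow from the defining relations by induction on $k$. You have supplied all the details the paper omits, and the scalar bookkeeping in both inductive steps is accurate.
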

\begin{proof}
The assertions follow from the defining relations $M_2(\alpha,\beta)$ by using induction on $k$.
\end{proof}
\begin{coro}\label{co}
    If $\alpha$ and $\beta$ are $l$-th roots of unity, then $X_{11}^l,X_{12}^{l},X_{21}^{l}$ and $X_{22}^{l}$ are in the center of $M_2(\alpha,\beta)$.
\end{coro}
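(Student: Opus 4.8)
The plan is to show that each of $X_{11}^l, X_{12}^l, X_{21}^l, X_{22}^l$ commutes with all four generators $X_{11}, X_{12}, X_{21}, X_{22}$, which suffices since these generate $M_2(\alpha,\beta)$. For the pairs governed by a purely ``quantum-commuting'' relation $YZ = \xi ZY$ with $\xi$ an $l$-th root of unity (built from $\alpha, \beta, \alpha^{-1}\beta$), raising one variable to the $l$-th power introduces the factor $\xi^{l} = 1$, so $Y^l$ and $Z$ commute; this handles $X_{12}^l$ and $X_{21}^l$ against everything (using that $X_{12}, X_{21}$ are normal with $q$-commutation scalars $\alpha, \beta, \beta\alpha^{-1}$, all $l$-th roots of unity), and also handles $X_{11}^l, X_{22}^l$ against $X_{12}$ and $X_{21}$.

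First I would dispose of $X_{12}^l$ and $X_{21}^l$: from the six defining relations, $X_{12}$ $q$-commutes with $X_{11}$ (scalar $\alpha$), with $X_{22}$ (scalar $\beta^{-1}$, rearranging $X_{22}X_{12}=\beta X_{12}X_{22}$), and with $X_{21}$ (scalar depending on $\beta\alpha^{-1}$); in each case the relevant scalar is an $l$-th root of unity, so $X_{12}^l$ is central, and symmetrically for $X_{21}^l$. Next, for $X_{11}^l$ and $X_{22}^l$ against $X_{12}, X_{21}$: again these are $q$-commutation relations with scalars among $\{\alpha^{\pm1}, \beta^{\pm1}\}$, so the $l$-th powers are central against $X_{12}$ and $X_{21}$.

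The only genuinely non-trivial commutations are $X_{11}^l$ with $X_{22}$ and $X_{22}^l$ with $X_{11}$, since here the relation $X_{22}X_{11} - X_{11}X_{22} = (\beta - \alpha^{-1})X_{12}X_{21}$ is a true commutator, not a $q$-commutation. This is exactly where Lemma \ref{crelation} does the work: applying part (1) with $k = l$ gives $X_{22}^l X_{11} = X_{11}X_{22}^l + \alpha^{-1}[(\alpha\beta)^l - 1]X_{12}X_{21}X_{22}^{l-1}$, and since $(\alpha\beta)^l = 1$ the correction term vanishes, so $X_{22}^l X_{11} = X_{11} X_{22}^l$; part (2) with $k = l$ gives $X_{22}X_{11}^l = X_{11}^l X_{22}$ in the same way, using $(\alpha\beta)^{-l} = 1$. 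Combining all cases, $X_{11}^l, X_{12}^l, X_{21}^l, X_{22}^l$ each commute with every generator, hence lie in the center. The main obstacle is precisely the commutator relation for $X_{11}$ and $X_{22}$, but Lemma \ref{crelation} reduces it to observing that the scalar coefficient $(\alpha\beta)^{\pm l} - 1$ vanishes; everything else is the routine observation that powers kill roots of unity.
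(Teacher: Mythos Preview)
Your proof is correct and is exactly the approach the paper intends: the corollary is stated immediately after Lemma~\ref{crelation} precisely because that lemma handles the only nontrivial commutation ($X_{11}^l$ with $X_{22}$ and $X_{22}^l$ with $X_{11}$), while all other pairs are pure $q$-commutations with scalars that are $l$-th roots of unity. The paper gives no explicit proof, so your write-up simply makes the implicit argument explicit.
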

\subsection{Polynomial Identity Algebras} In the roots of unity setting, the quantum matrix algebra $M_2(\alpha,\beta)$ becomes a finitely generated module over its center, by Corollary \ref{co}. Hence as a result \cite[Corollary 13.1.13]{mcr}, the algebra $M_2(\alpha,\beta)$ becomes a polynomial identity algebra. This sufficient condition on the parameters to be PI algebra is also necessary.
\begin{prop}\emph{(\cite[Proposition 2.1]{gl})}
    The algebra $M_2(\alpha,\beta)$ is a PI algebra if and only if $\alpha$ and $\beta$ are roots of unity.
\end{prop}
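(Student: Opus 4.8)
The plan is to treat the two implications separately. One direction is already in hand: the paragraph preceding the statement shows that if $\alpha$ and $\beta$ are (say $l$-th) roots of unity, then by Corollary \ref{co} the algebra $M_2(\alpha,\beta)$ is a finitely generated module over the central subalgebra generated by $X_{11}^l,X_{12}^l,X_{21}^l,X_{22}^l$, hence is PI by \cite[Corollary 13.1.13]{mcr}. So the real content is the converse, which I would prove by contraposition: assuming one of the parameters fails to be a root of unity, I will exhibit inside $M_2(\alpha,\beta)$ a subalgebra that is not PI, and since subalgebras and Ore localizations of PI algebras are PI, this forces $M_2(\alpha,\beta)$ not to be PI.

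Suppose $\alpha$ is not a root of unity (the case of $\beta$ is identical after swapping the roles of $X_{12}$ and $X_{21}$, using the relation $X_{21}X_{11}=\beta X_{11}X_{21}$). First I would isolate the right subalgebra: from $X_{12}X_{11}=\alpha X_{11}X_{12}$ together with the PBW basis $\{X_{11}^aX_{12}^bX_{21}^cX_{22}^d\}$ recorded above, the subalgebra $B$ generated by $X_{11}$ and $X_{12}$ has $\mathbb{K}$-basis $\{X_{11}^aX_{12}^b : a,b\geq 0\}$, and hence is isomorphic to the quantum plane $\mathbb{K}\langle x,y \mid yx=\alpha xy\rangle$. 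Thus it suffices to show this quantum plane is not PI when $\alpha$ is not a root of unity. I would then pass to the quantum torus $T$ obtained from $B$ by inverting $x$ and $y$, i.e.\ the localization of $B$ at the Ore set of monomials in $x,y$; a localization of a PI algebra is PI, so it is enough to see that $T$ is not PI.

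The finish uses two classical facts about $T$, valid precisely because $\alpha$ is not a root of unity. A short monomial computation gives $Z(T)=\mathbb{K}$: if $x^ay^b$ with $a,b\in\mathbb{Z}$ is central, then comparing it against $y$ and then against $x$ forces $\alpha^a=\alpha^b=1$, so $a=b=0$. And $T$ is a simple ring: any nonzero two-sided ideal contains a nonzero element whose support can be shrunk, via commutators with $x$ and $y$, to a single monomial, which is a unit. A simple PI ring is finite-dimensional over its center (Kaplansky), so $T$ would be a finite-dimensional domain over the algebraically closed field $\mathbb{K}$, forcing $T=\mathbb{K}$ — absurd, since $T$ is infinite-dimensional. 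Hence $B$, and therefore $M_2(\alpha,\beta)$, is not PI. The routine steps (the PBW identification of $B$, stability of PI under subalgebras and Ore localization) are immediate; the only external input is Kaplansky's theorem, which is standard PI theory, and I do not anticipate a genuine obstacle — this is just the familiar dichotomy that a quantum plane is PI exactly when its parameter is a root of unity, imported into $M_2(\alpha,\beta)$ through a quantum-plane subalgebra.
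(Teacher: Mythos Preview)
Your proposal is correct. Note, however, that the paper does not itself prove this proposition: the paragraph immediately preceding it establishes the ``if'' direction (via Corollary~\ref{co} and \cite[Corollary 13.1.13]{mcr}, exactly as you recapitulate), and then simply cites \cite[Proposition 2.1]{gl} for the full biconditional. So there is no proof in the paper to compare your converse against. Your argument for ``only if''---locating a quantum-plane subalgebra $\mathbb{K}\langle x,y\mid yx=\alpha xy\rangle$ inside $M_2(\alpha,\beta)$ via the PBW basis, passing to its quantum torus localization, and invoking Kaplansky's theorem together with the standard facts that this torus is simple with center $\mathbb{K}$ when $\alpha$ is not a root of unity---is a valid and standard route, and supplies what the paper delegates to the reference.
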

Kaplansky's Theorem has a striking consequence in the case of a prime affine PI algebra over an algebraically closed field.
\begin{prop}\label{pidimresult}\emph{(\cite[Theorem I.13.5]{brg})}
Let $A$ be a prime affine PI algebra over an algebraically closed field $\mathbb{K}$ and $V$ be a simple $A$-module. Then $V$ is a finite-dimensional vector space over $\mathbb{K}$ with $\dime_{\mathbb{K}}(V)\leq \pideg (A)$.
\end{prop}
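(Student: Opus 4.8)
The statement is a standard consequence of the structure theory of prime affine PI algebras, so the plan is to reduce to a matrix algebra over $\mathbb{K}$ via $\ann_A(V)$ and then track the PI degree through the quotient. First I would pass to $B := A/\ann_A(V)$. Since $V$ is a faithful simple $B$-module, $B$ is a primitive ring, and being a homomorphic image of the PI algebra $A$ it is again PI. Kaplansky's theorem then tells us that $B$ is simple artinian and finite-dimensional over its center, so $B \cong M_k(\mathbb{D})$ for some integer $k \ge 1$ and some division ring $\mathbb{D}$ with $\dime_{Z}(\mathbb{D}) < \infty$, where $Z = Z(B)$.

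The crucial step is then to upgrade this to $B \cong M_k(\mathbb{K})$, and here affineness enters. The algebra $B$ is affine over $\mathbb{K}$, being a quotient of the affine algebra $A$, and it is module-finite over the commutative subring $Z$; by the Artin--Tate lemma this forces $Z$ to be affine over $\mathbb{K}$. But $Z$ is a field, so Zariski's lemma (equivalently, the Nullstellensatz) shows that $Z$ is algebraic over $\mathbb{K}$, and since $\mathbb{K}$ is algebraically closed we get $Z = \mathbb{K}$. Consequently $\mathbb{D}$ is a finite-dimensional division algebra over the algebraically closed field $\mathbb{K}$, which forces $\mathbb{D} = \mathbb{K}$; hence $B \cong M_k(\mathbb{K})$. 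Since the unique simple module of $M_k(\mathbb{K})$ is $\mathbb{K}^k$ and $V$ is a simple $B$-module, we conclude $\dime_{\mathbb{K}}(V) = k < \infty$.

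It remains to bound $k$ by $\pideg(A)$. Every polynomial identity of $A$ passes to the quotient $B$, so $\pideg(B) \le \pideg(A)$; combined with $\pideg(M_k(\mathbb{K})) = k$ (Amitsur--Levitzki), this yields $\dime_{\mathbb{K}}(V) = k \le \pideg(A)$, as desired. The main obstacle is the reduction $Z = \mathbb{K}$, which bundles together Kaplansky's structure theorem, the Artin--Tate lemma, and Zariski's lemma; one could instead invoke the PI-Nullstellensatz of Amitsur--Procesi directly. Everything else in the argument is routine bookkeeping, so in the paper this proposition is best simply cited as \cite[Theorem I.13.5]{brg}.
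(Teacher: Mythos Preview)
Your argument is correct and is essentially the standard proof behind \cite[Theorem~I.13.5]{brg}: pass to the primitive PI quotient $B=A/\ann_A(V)$, apply Kaplansky's theorem to get $B\cong M_k(\mathbb{D})$, use Artin--Tate plus the Nullstellensatz to force $Z(B)=\mathbb{K}$ and hence $\mathbb{D}=\mathbb{K}$, and then compare PI degrees. The paper, however, does not prove this proposition at all; it is stated with a direct citation to \cite[Theorem~I.13.5]{brg} and used as a black box, exactly as you anticipate in your final sentence.
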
 
This result provides the important link between the PI degree of a prime affine PI algebra over an algebraically closed field and its irreducible representations. Moreover, the upper bound PI-deg($A$) is attained for such an algebra $A$ (cf. \cite[Lemma III.1.2]{brg}). 
\begin{rema}\label{fdb}
The algebra $M_2(\alpha,\beta)$ is classified as a prime affine PI algebra. From the above discussion, it is quite clear that each simple $M_2(\alpha,\beta)$-module is finite-dimensional and can have dimension at most $\pideg {M_2(\alpha,\beta)}$. Therefore the calculation of PI degree for $M_2(\alpha,\beta)$ is of substantial importance. Section \ref{secpideg} will focus on determining the PI degree of $M_2(\alpha,\beta)$. All the aforementioned results for the algebra $M_2(\alpha,\beta)$ are also applicable to $M_n(\alpha,\beta)$ in general (cf. \cite{take,gl,brg}). 
\end{rema}
\section{The Center of $M_2(\alpha,\beta)$}\label{seccen}
This section aims to calculate the center of the two-parameter quantum matrix algebra $M_2(\alpha,\beta)$ in the root of unity setting. The center for the single-parameter case where $\alpha=\beta$ is the $\mathbb{K}$-algebra generated by $X_{11}^m, X_{22}^m, D, X_{11}^{m'}X_{22}^{m'}$ and $X_{12}^rX_{21}^{m-r}$, where $r=0,1,\cdots, m$ and $m=\ord(\alpha), m'=\ord(\alpha^2)$, was determined in \cite{jkjz}. When $\alpha=\beta^{-1}$, the algebra $M_2(\alpha,\alpha^{-1})$ becomes a single parameter quantum affine space (see Subsection \ref{piisoqa}) and the center is the $\mathbb{K}$ algebra generated by $X_{ij}^m,X^{m-1}_{11}X_{22}$ and $X_{11}^{m-2}X_{12}X_{21}$, where $m=\ord(\alpha)$ and $i,j=1,2$, by using \cite[Proposition 7.1]{di}. 
\par In the remainder of this section, we assume that $\alpha,\beta\in \mathbb{K}^*$ with $\alpha\beta^{\pm 1}\neq 1$ such that $m:=\ord(\alpha)$ and $n:=\ord(\beta)$ are finite. Denote $l:=\lcmu(m,n)$. It is easy to check using the defining relations that the elements $X_{11}^l,\ X_{12}^l,\ X_{21}^l,\ X_{22}^l$ and $D^{\ord(\alpha\beta^{-1})}$ are in the center of $M_2(\alpha,\beta)$. Denote the lexicographical order on $\mathbb{Z}^2_{+}$ by $(a',b')<(a,b)$, that means $a'<a$, or $a'=a$ and $b'<b$. For any $0\neq f\in M_2(\alpha,\beta)$, we may uniquely write
\[f=f_{uv}X_{11}^{u}X_{22}^v+\sum\limits_{(a,b)<(u,v)}f_{ab}X_{11}^{a}X_{22}^b,\]
where each $f_{uv}\neq 0$ and $f_{ab}$ is an element of the subalgebra generated by $X_{12}$ and $X_{21}$. Denote the degree of $f$ by $\deg(f)=(u,v)$. 
\par In the following, we wish to determine the center of $M_2(\alpha,\beta)$ completely under the assumption $\ord(\alpha\beta)=\ord(\alpha\beta^{-1})$. Importantly, this equality gives $\ord(\alpha\beta)=\ord(\alpha\beta^{-1})$ is divisible by $\lcmu(\ord(\alpha^2),\ord(\beta^2))$. In particular, this equality holds if $m=\ord(\alpha)$ and $n=\ord(\beta)$ are relatively prime, and in this case, the center was determined in \cite[Lemma 3.2]{gl}, which becomes a polynomial algebra. Moreover the equality $\ord(\alpha\beta)=\ord(\alpha\beta^{-1})$ holds in a more general context as mentioned in \cite[Proposition 8.2]{bmn}. 
\begin{lemm}\label{implemma}
    Let $\alpha,\beta\in\mathbb{K}^*$ with $\alpha \beta^{\pm 1}\neq 1$ such that $\ord(\alpha\beta)=\ord(\alpha\beta^{-1})=t~(\text{say})$. Then each non-negative solution $(a,b)$ of $\alpha^a\beta^b=1=\alpha^b\beta^a$ satisfies \[a\equiv k t~(\mo l)\ \ \text{and}\ \ b\equiv k t~(\mo l)\] for some $0\leq k\leq \frac{l}{t}-1$.
\end{lemm}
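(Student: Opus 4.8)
The plan is to convert the two multiplicative identities into additive congruences modulo $t$ and then bootstrap these up to congruences modulo $l$. First I would multiply $\alpha^a\beta^b=1$ and $\alpha^b\beta^a=1$ to obtain $(\alpha\beta)^{a+b}=1$, hence $t\divides a+b$, and divide them to obtain $(\alpha\beta^{-1})^{a-b}=1$, hence $t\divides a-b$; in particular $a\equiv b~(\mo t)$ and $2a\equiv 2b\equiv 0~(\mo t)$. Next, from $\alpha^{2t}=(\alpha\beta)^{t}(\alpha\beta^{-1})^{t}=1$ and the analogous identity for $\beta$ one gets $\alpha^{t},\beta^{t}\in\{1,-1\}$, while $\alpha^{t}\beta^{t}=(\alpha\beta)^{t}=1$ forces $\alpha^{t}=\beta^{t}$; write $\epsilon$ for this common value.

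The heart of the matter is a dichotomy on $\epsilon$. I would first show that $\epsilon=-1$ forces $t$ to be odd: the relation $\alpha^{t}=-1$ says precisely that $m=\ord(\alpha)$ divides $2t$ but not $t$, which pins down $v_{2}(m)=v_{2}(t)+1$, and symmetrically $v_{2}(n)=v_{2}(t)+1$; if $t$ were even this common $2$-adic valuation would be at least $2$, so the $2$-primary parts of $\alpha$ and $\beta$ would be generators $\zeta^{i},\zeta^{j}$ (with $i,j$ odd) of a cyclic $2$-group of order $2^{v_{2}(t)+1}\geq 4$, and $\ord(\alpha\beta)=\ord(\alpha\beta^{-1})=t$ would then force $v_{2}(i+j)=v_{2}(i-j)=1$ — impossible, since that would make $4$ divide $(i+j)+(i-j)=2i$ with $i$ odd. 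This $2$-adic bookkeeping is the step I expect to be the main obstacle.

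With the dichotomy available I would deduce $t\divides a$ and $t\divides b$. If $t$ is odd this is immediate from $2a\equiv 2b\equiv 0~(\mo t)$. If $t$ is even then $\epsilon=1$, so $\beta^{t}=1$, and from $\alpha^{a}=\beta^{-b}$ we get $(\alpha\beta)^{a}=\alpha^{a}\beta^{a}=\beta^{a-b}=(\beta^{t})^{(a-b)/t}=1$, whence $t\divides a$, and similarly $t\divides b$. Writing $a=ta_{1}$, $b=tb_{1}$, it then remains to prove $l\divides a-b$. When $\epsilon=1$ we have $m,n\divides t$, so $l\divides t$; since also $t\divides l$ (because $(\alpha\beta)^{l}=1$), this gives $l=t$, and $t\divides a-b$ finishes it. When $\epsilon=-1$ we have $t$ odd and $v_{2}(m)=v_{2}(n)=1$, so $m/2$ and $n/2$ are odd divisors of $t$ with $\lcmu(m/2,n/2)=l/2$, giving $l/2\divides t$; combined with $t\divides l$ and $t$ odd this yields $t=l/2$, while $1=\alpha^{a}\beta^{b}=(-1)^{a_{1}+b_{1}}$ gives $a_{1}\equiv b_{1}~(\mo 2)$, so $l=2t$ divides $t(a_{1}-b_{1})=a-b$.

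Finally, since $t\divides a$ and $t\divides l$, the residue of $a$ modulo $l$ is a multiple of $t$, say $a\equiv kt~(\mo l)$ with $0\leq k\leq \frac{l}{t}-1$; and $l\divides a-b$ then gives $b\equiv kt~(\mo l)$ with the same $k$, which is the assertion.
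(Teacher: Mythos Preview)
Your proof is correct, but it is organized around a different idea than the paper's. Both proofs begin the same way, deducing $t\mid(a\pm b)$ and hence $t\mid 2a$, $t\mid 2b$. From there the paper handles the even-$t$ case by invoking the previously stated fact that $\lcmu(\ord(\alpha^2),\ord(\beta^2))\mid t$: writing $a=x\frac{t}{2}$, $b=y\frac{t}{2}$ and using $\beta^{2t}=1$ to kill a factor, one gets $(\alpha\beta)^{xt/2}=1$ and hence $x$ even; the final step writes $a\equiv a^*t$, $b\equiv b^*t~(\mo l)$ and shows $m,n\mid(a^*-b^*)t$, forcing $a^*=b^*$. Your argument instead pivots on the sign $\epsilon=\alpha^t=\beta^t\in\{\pm1\}$ and the $2$-adic lemma that $\epsilon=-1$ forces $t$ odd; this dichotomy then does double duty, giving $t\mid a,b$ in each branch and, via the consequence $l\in\{t,2t\}$, yielding $l\mid(a-b)$ directly. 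The paper's route is shorter and avoids the $2$-adic bookkeeping, while yours is self-contained (it re-derives rather than cites the divisibility by $\ord(\alpha^2),\ord(\beta^2)$) and extracts the structural fact $l\in\{t,2t\}$ along the way.
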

\begin{proof}
Suppose $(a,b)$ is a nonnegative solution of $\alpha^a\beta^b=1=\alpha^b\beta^a$. Then $\ord(\alpha\beta^{-1})$ divides $(a-b)$ and $\ord(\alpha\beta)$ divides $(a+b)$. This implies that $t$ divides $2a$ and $2b$, according to our assumption.
\par We first claim that $t$ divides both $a$ and $b$. If $t$ is odd, then we are done. If $t$ is even, we can write
$a=x\frac{t}{2}$ and $b=y\frac{t}{2}$ for some integers $x,y$ with $x\pm y$ is even. Note that $x+y$ or $x-y$ is divisible by $4$. Without loss of generality, we may assume that $x-y$ is divisible by 4. Now using the fact $t$ is divisible by $\lcmu(\ord(\alpha^2),\ord(\beta^2))$, simplify the equality
\[1=\alpha^a\beta^b=(\alpha\beta)^{x\frac{t}{2}}\beta^{(y-x)\frac{t}{2}}=(\alpha\beta)^{x\frac{t}{2}}\]
to obtain $t$ divides $x\frac{t}{2}$. This implies $x$ must be even. Similarly, the equality $1=\alpha^b\beta^a$ gives $y$ is even. Thus we can conclude that $t$ divides $a$ and $b$.
\par Suppose $a\equiv a^*t~(\mo l)$ and $b\equiv b^* t~(\mo l)$ for some $0\leq a^*,b^*\leq \frac{l}{t}-1$. We now claim that $a^*=b^*$. Indeed the equality $\alpha^a\beta^b=1$ gives
\[ \alpha^{a^*t}\beta^{a^*t}=1
    \implies \alpha^{(a^*-b^*)t}(\alpha\beta)^{b^*t}=1
    \implies \ord(\alpha)| (a^*-b^*)t.\]
    Similarly the equality $\alpha^b\beta^a=1$ gives $\ord(\beta)$ divides $(a^*-b^*)t$. Thus combining these two we obtain 
    \[l|(a^*-b^*)t\implies \frac{l}{t}|(a^*-b^*)\implies a^*=b^*.\]
    Hence we can conclude that $a\equiv kt~(\mo l)$ and $b\equiv kt~(\mo l)$ for some $0\leq k\leq \frac{l}{t}-1$. This completes the proof.
\end{proof}
\begin{theo}\label{centhm}
 Let $\alpha,\beta\in\mathbb{K}^*$ with $\alpha\beta^{\pm 1}\neq 1$ such that $\ord(\alpha\beta)=\ord(\alpha\beta^{-1})=t~(\text{say})$. Then the center of $M_2(\alpha,\beta)$ is the $\mathbb{K}$-algebra generated by 
 \begin{equation}\label{cengen}
     X_{11}^l,\ X_{12}^l,\ X_{21}^l,\  X_{22}^l,\ X_{11}^{t}X_{22}^{t}\ \text{and}\ X_{12}^{t}X_{21}^{t}.
 \end{equation}
\end{theo}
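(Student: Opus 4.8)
\emph{Plan.} I would establish the two inclusions separately. For $\supseteq$: this is a direct verification. Since $\alpha$ and $\beta$ are $l$-th roots of unity, $X_{11}^l,X_{12}^l,X_{21}^l,X_{22}^l$ are central by Corollary \ref{co}. For $X_{12}^tX_{21}^t$ and $X_{11}^tX_{22}^t$ one commutes each of the four generators across the element using the relations of \eqref{spp}: the scalars produced are powers of $(\alpha\beta)^{\pm t}$ and $(\alpha\beta^{-1})^{\pm t}$, all equal to $1$ by hypothesis, and for the commutations of $X_{11}^tX_{22}^t$ with $X_{11}$ and $X_{22}$ one invokes Lemma \ref{crelation}(1),(2) with $k=t$, whose correction terms carry the vanishing factors $(\alpha\beta)^t-1$ and $1-(\alpha\beta)^{-t}$. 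I record for later that $t\mid l$, that $l\mid 2t$ (since $\alpha^{2t}=(\alpha\beta)^t(\alpha\beta^{-1})^t=1$, and likewise $\beta^{2t}=1$), and that $\epsilon:=\alpha^t=\beta^t$ satisfies $\epsilon^2=1$.

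For $\subseteq$, let $Z$ be the $\mathbb{K}$-subalgebra generated by the six elements of \eqref{cengen}; by the above it is contained in the center. A bookkeeping argument in the ordered-monomial basis shows that $X_{11}^aX_{12}^bX_{21}^cX_{22}^d$ is a nonzero scalar multiple of a product of powers of these six elements exactly when $t\mid a,b,c,d$, $l\mid(a-d)$ and $l\mid(b-c)$; I call such monomials \emph{$Z$-monomials} (by $l\mid 2t$, when $t\mid a$ and $t\mid d$ the condition $l\mid(a-d)$ is equivalent to $l\mid(a+d)$). I would then show that every central $z$ lies in $Z$ by induction on $\deg(z)$ along the lexicographic well-order on $\mathbb{Z}^2_{+}$.

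So take central $z$ with $\deg(z)=(u,v)$ and leading term $f_{uv}X_{11}^uX_{22}^v$, where $f_{uv}=\sum c_{ij}X_{12}^iX_{21}^j\neq 0$. In $zX_{11}-X_{11}z=0$ and $zX_{22}-X_{22}z=0$, only the leading term of $z$ can reach the graded component of degree $(u+1,v)$, resp.\ $(u,v+1)$: a term of $z$ of degree $(a,b)<(u,v)$ affects only degrees $\leq(a+1,b)$, resp.\ $\leq(a,b+1)$, which are strictly smaller. Computing those components with Lemma \ref{crelation} (to move $X_{11}$ past $X_{22}^v$, resp.\ $X_{22}$ past $X_{11}^u$) forces $\alpha^i\beta^j=1=\alpha^j\beta^i$ whenever $c_{ij}\neq 0$; Lemma \ref{implemma} then gives $i\equiv j\equiv k_{ij}t\pmod l$, so $t\mid i,j$ and $l\mid(i-j)$. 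Since multiplication by $X_{12}$ or $X_{21}$ leaves the $(X_{11},X_{22})$-degree unchanged, the degree-$(u,v)$ components of $zX_{12}-X_{12}z=0$ and $zX_{21}-X_{21}z=0$ also come only from the leading term and yield $\beta^{v+j}=\alpha^{u+j}$ and $\alpha^{v+i}=\beta^{u+i}$ for all $(i,j)$ with $c_{ij}\neq 0$; using $\alpha^i=\beta^i=\epsilon^{i/t}$ and $\alpha^j=\beta^j=\epsilon^{j/t}$ these reduce to $\beta^v=\alpha^u$ and $\alpha^v=\beta^u$, and then, choosing $b\geq 0$ with $b\equiv -v\pmod l$, the pair $(u,b)$ satisfies $\alpha^u\beta^b=1=\alpha^b\beta^u$, so Lemma \ref{implemma} gives $t\mid u,v$ and $l\mid(u+v)$, hence also $l\mid(u-v)$.

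It follows that, in the ordered basis, every monomial of $f_{uv}X_{11}^uX_{22}^v$ is a scalar multiple of $X_{11}^uX_{12}^iX_{21}^jX_{22}^v$ with $t\mid u,i,j,v$, $l\mid(u-v)$ and $l\mid(i-j)$ — i.e.\ a $Z$-monomial; hence $f_{uv}X_{11}^uX_{22}^v\in Z$ and is central, so $z-f_{uv}X_{11}^uX_{22}^v$ is central of strictly smaller degree and lies in $Z$ by induction (the base case $\deg(z)=(0,0)$, where $z\in\mathbb{K}\langle X_{12},X_{21}\rangle$, being covered by the first computation above). Therefore the center equals $Z$. I expect the main obstacle to be that first inductive step: pinning down precisely which graded components of the commutators $zX_{st}-X_{st}z$ are governed by the leading term alone (so that no cancellation with lower-degree terms can occur) and then extracting the exponent congruences via Lemmas \ref{crelation} and \ref{implemma}; recognising the $Z$-monomials is comparatively routine, though it uses the dichotomy $l\in\{t,2t\}$ forced by $\epsilon^2=1$.
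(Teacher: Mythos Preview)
Your proposal is correct and follows essentially the same strategy as the paper: verify that the six elements are central, then run a descent on the lexicographic $(X_{11},X_{22})$-degree, extracting from the commutators with $X_{11},X_{22}$ the congruences on the exponents $(i,j)$ of $f_{uv}$ and from the commutators with $X_{12},X_{21}$ the congruences on $(u,v)$, invoking Lemma~\ref{implemma} twice. Your write-up is in fact a bit more careful than the paper's in two spots --- you make explicit the observation $l\mid 2t$ (equivalently $\epsilon:=\alpha^t=\beta^t$ with $\epsilon^2=1$), and you apply Lemma~\ref{implemma} to the nonnegative pair $(u,b)$ with $b\equiv -v\pmod l$, which yields $l\mid(u+v)$ and then $l\mid(u-v)$ via $l\mid 2t$; the paper simply asserts $u\equiv kt$ and $v\equiv kt$ with the same $k$, which strictly speaking requires exactly the reduction you spell out.
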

\begin{proof}
Let $\mathcal{Z}$ denote the center $M_2(\alpha,\beta)$ and $\mathcal{S}$ denote the $\mathbb{K}$-subalgebra generated by the elements in (\ref{cengen}). Suppose 
\[f=f_{uv}X_{11}^{u}X_{22}^v+\sum\limits_{(a,b)<(u,v)}f_{ab}X_{11}^{a}X_{22}^b\]
is a nonzero element in $\mathcal{Z}\setminus \mathcal{S}$ with the minimal degree $(u,v)$. Now we can simplify the equality $X_{11}f=fX_{11}$ to obtain
\[0=X_{11}f-fX_{11}=(X_{11}f_{uv}-f_{uv}X_{11})X_{11}^uX_{22}^v+\ \ \text{lower\ degree terms}.\] 
This implies $X_{11}f_{uv}-f_{uv}X_{11}=0$. Similarly the equality $X_{22}f=fX_{22}$ implies that $X_{22}f_{uv}-f_{uv}X_{22}=0$. Write $f_{uv}=\sum \gamma_{ij}X_{12}^iX_{21}^j$ with $\gamma_{ij}\in \mathbb{K}$. Then 
\[0=X_{11}f_{uv}-f_{uv}X_{11}=\sum \gamma_{ij}(\alpha^{-i}\beta^{-j}-1)X_{12}^iX_{21}^jX_{11},\]
\[0=X_{22}f_{uv}-f_{uv}X_{22}=\sum \gamma_{ij}(\beta^{i}\alpha^{j}-1)X_{12}^iX_{21}^jX_{22}.\]
This gives $\alpha^{-i}\beta^{-j}=1=\beta^{i}\alpha^{j}$ for all $(i,j)$ in support of $f_{uv}$. Therefore by Lemma \ref{implemma} we obtain $i\equiv k_{ij}t~(\mo l)$ and $j\equiv k_{ij}t~(\mo l)$ where $0\leq k_{ij}\leq \frac{l}{t}-1$, for all $(i,j)$ in support of $f_{uv}$. Thus $f_{uv}$ is a polynomial in $X_{12}^{t}X_{21}^{t},\ X_{12}^{l}$ and $X_{21}^l$. Then we simplify the equalities $X_{12}f=fX_{12}$ and $X_{21}f=fX_{21}$ to obtain $\beta^v\alpha^{-u}=1$ and $\alpha^v\beta^{-u}=1$ respectively. Again by Lemma \ref{implemma}, we have $u\equiv kt~(\mo l)$ and $v\equiv kt~(\mo l)$. So $f-f_{uv}X_{11}^{u}X_{22}^v$ is a nonzero element in $\mathcal{Z}\setminus \mathcal{S}$, with the degree less than $(u,v)$. This contradicts the fact that the degree $(u,v)$ of $f$ is minimal. This completes the proof.
\end{proof}
\par Now we establish an algebraic dependence relationship for the central element $D^{t}$, where $t=\ord(\alpha\beta^{-1})=\ord(\alpha\beta)$. First we can simplify the the $k$-th power of $D=X_{11}X_{22}-\alpha^{-1}X_{12}X_{21}$ using the following fact:
\begin{enumerate}
    \item The element $X_{12}X_{21}$ commute with $X_{11}X_{22}$.
    \item For any $r\geq 1$, using Lemma \ref{crelation}, we get \[(X_{11}^rX_{22}^r)(X_{11}X_{22})=X_{11}^{r+1}X_{22}^{r+1}+\alpha^{-1}[1-(\alpha\beta)^{-r}]X_{12}X_{21}X_{11}^{r}X_{22}^r.\]
\end{enumerate}
Finally, we obtain the following identity, for any $k\geq 1$:
\begin{equation}\label{didentity}
D^k=X_{11}^kX_{22}^k+\sum\limits_{r=1}^{k-1}C_{r}^{(k)}X_{11}^rX_{22}^r+(-1)^{k}\alpha^{\frac{-k(k+1)}{2}}\beta^{\frac{k(k-1)}{2}}X_{12}^kX_{21}^k 
\end{equation}
where each $C_r^{(k)}$ is an element in the subalgebra generated by $X_{12}$ and $X_{21}$. Moreover rearranging the equality (\ref{didentity}) for $k=t$, we have
\[D^{t}-X_{11}^{t}X_{22}^{t}-(-1)^{t}\alpha^{\frac{-t(t+1)}{2}}\beta^{\frac{t(t-1)}{2}}X_{12}^{t}X_{21}^{t}=\sum\limits_{r=1}^{t-1}C_{r}^{(t)}X_{11}^rX_{22}^r.\]
The left-hand side is central, and so is the right-hand side. By Theorem \ref{centhm}, we can conclude that $C_r^{(t)}=0$ for each $1\leq r\leq t-1$. Thus we obtain an algebraic dependency relation $D^{t}=X_{11}^{t}X_{22}^{t}+(-1)^{t}\alpha^{\frac{-t(t+1)}{2}}\beta^{\frac{t(t-1)}{2}}X_{12}^{t}X_{21}^{t}$ where $t=\ord(\alpha\beta^{-1})=\ord(\alpha\beta)$.
\begin{coro}\label{cenpoly}
Assume that $\ord(\alpha\beta)=\ord(\alpha\beta^{-1})=l$. This equality holds when $m=n$ is an odd prime or when $|e_m(p)-e_n(p)|\geq 1$ for all primes $p$ in the prime factorization of $m$ or $n$, where $e_k(p)$ denotes the exponent of the prime $p$ in the prime factorization of $k$ (cf. \cite[Proposition 8.2]{bmn}). In this case, according to Theorem \ref{centhm}, the center of $M_2(\alpha,\beta)$ is the polynomial algebra generated by $X_{11}^l, X_{12}^l, X_{21}^l$ and $X_{22}^l$. In particular, this assumption holds when $\gcd(m,n)=1$; in that case, the center was determined in \cite{ddjj,gl}.
\end{coro}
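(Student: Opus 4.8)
The plan is to read the structure of the center straight off Theorem \ref{centhm}: under the hypothesis $t:=\ord(\alpha\beta)=\ord(\alpha\beta^{-1})=l$, two of the six generators listed in (\ref{cengen}) become redundant, and what survives is visibly a polynomial algebra. (In the degenerate case $l=1$ one has $\alpha=\beta=1$, so $M_2(\alpha,\beta)$ is the commutative polynomial ring $\mathbb{K}[X_{11},X_{12},X_{21},X_{22}]$ and the assertion is immediate; so assume $l>1$ in what follows.)

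First I would note that $\alpha\beta^{\pm1}\neq 1$ is automatic here: if $\alpha\beta=1$ or $\alpha\beta^{-1}=1$ then $t=1$, contrary to $t=l>1$. Hence Theorem \ref{centhm} applies, and the center $\mathcal{Z}$ of $M_2(\alpha,\beta)$ is the $\mathbb{K}$-algebra generated by $X_{11}^{l},X_{12}^{l},X_{21}^{l},X_{22}^{l},X_{11}^{t}X_{22}^{t}$ and $X_{12}^{t}X_{21}^{t}$. Since $t=l$ and all of these elements are central, hence commute, we have $X_{11}^{t}X_{22}^{t}=X_{11}^{l}\cdot X_{22}^{l}$ and $X_{12}^{t}X_{21}^{t}=X_{12}^{l}\cdot X_{21}^{l}$, so the last two generators already lie in the subalgebra generated by the first four. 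Therefore $\mathcal{Z}=\mathbb{K}[X_{11}^{l},X_{12}^{l},X_{21}^{l},X_{22}^{l}]$ as a $\mathbb{K}$-algebra.

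Next I would check that these four commuting central elements are algebraically independent, so that $\mathcal{Z}$ is genuinely a polynomial algebra. Any polynomial relation among them, rewritten using centrality so that each monomial appears in the order $X_{11}^{la}X_{12}^{lb}X_{21}^{lc}X_{22}^{ld}$, becomes a $\mathbb{K}$-linear combination of pairwise distinct such ordered monomials; these are distinct members of the PBW basis $\{X_{11}^{a}X_{12}^{b}X_{21}^{c}X_{22}^{d}\}$ of the domain $M_2(\alpha,\beta)$, hence linearly independent over $\mathbb{K}$, forcing every coefficient to vanish.

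It remains to justify the stated sufficient conditions for $t=l$. Both ``$m=n$ is an odd prime'' and ``$|e_m(p)-e_n(p)|\geq 1$ for every prime $p$ in the factorization of $m$ or $n$'' yield $\ord(\alpha\beta)=\ord(\alpha\beta^{-1})=\lcmu(m,n)$ by \cite[Proposition 8.2]{bmn}, which I would invoke as a black box. The case $\gcd(m,n)=1$ is the instance of the second condition in which no prime divides both $m$ and $n$; alternatively one sees it directly, since $\ord(\alpha\beta)\mid\lcmu(m,n)=mn$, while $(\alpha\beta)^{k}=1$ forces $\alpha^{kn}=\beta^{-kn}=1$ and $\beta^{km}=\alpha^{-km}=1$, so $m\mid kn$ and $n\mid km$, hence $m\mid k$ and $n\mid k$ by coprimality, giving $mn\mid k$ and $\ord(\alpha\beta)=mn=l$; the same computation gives $\ord(\alpha\beta^{-1})=l$, and this recovers the center computed in \cite{ddjj,gl}. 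There is no real obstacle in this argument: the only points needing attention are the automatic validity of the hypothesis $\alpha\beta^{\pm1}\neq 1$ of Theorem \ref{centhm} (handled above) and the purely number-theoretic equality $t=l$, for which I defer to \cite{bmn}.
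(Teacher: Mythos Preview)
Your proposal is correct and follows essentially the same route as the paper: the corollary has no separate proof there, being an immediate consequence of Theorem~\ref{centhm} once one notes that for $t=l$ the two ``mixed'' generators $X_{11}^{t}X_{22}^{t}$ and $X_{12}^{t}X_{21}^{t}$ become products of the four $l$-th powers. Your additional care---the explicit treatment of the degenerate case $l=1$, the verification that $\alpha\beta^{\pm1}\neq1$ is automatic, and the PBW argument for algebraic independence of the four generators---fills in details the paper leaves implicit (the standing hypothesis $\alpha\beta^{\pm1}\neq1$ is in force throughout Section~\ref{seccen}, and the polynomial-algebra claim is asserted without justification), but the core idea is the same.
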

\section{PI Degree for $M_2(\alpha,\beta)$}\label{secpideg}
In this section, we wish to compute an explicit expression of the PI degree for $M_2(\alpha,\beta)$ at roots of unity. We will employ the derivation erasing process independent of characteristics, as established by Leroy and Matczuk \cite[Theorem 7]{lm2}. Furthermore, we will utilize a key technique for calculating the PI degree of quantum affine space as introduced by De Concini and Procesi \cite[Proposition 7.1]{di}. Given a multiplicatively antisymmetric matrix $\Lambda=(\lambda_{ij})$, the quantum affine space is the $\mathbb{K}$-algebra $\mathcal{O}_{\Lambda}(\mathbb{K}^n)$ generated by the variables $x_1,\cdots ,x_n$ subject to the relations \[x_ix_j=\lambda_{ij}x_jx_i, \ \ \ \forall\ \ \ 1 \leq i,j\leq n.\]
The algebra $\mathcal{O}_{\Lambda}(\mathbb{K}^n)$ becomes a PI algebra at roots of unity (cf. \cite[Proposition I.14.2]{brg}). The following result simplifies the key algorithm \cite[Proposition 7.1]{di} through the properties of the integral matrix associated with $\Lambda$.
\begin{prop}\emph{(\cite[Lemma 5.7]{ar})}\label{mainpi}
Suppose that $\lambda_{ij}=q^{h_{ij}}$ where $q \in \mathbb{K}^*$ is a primitive $l$-th root of unity and $H=(h_{ij})$ be a skew-symmetric integral matrix with $\ran(H)=2s$ and invariant factors $h_1,\cdots,h_s$ (come in pairs). Then PI degree of $\mathcal{O}_{\Lambda}(\mathbb{K}^n)$ is given as \[\pideg\mathcal{O}_{\Lambda}(\mathbb{K}^n)=\prod_{i=1}^{s}\frac{l}{\gcdi(h_i,l)}.\]
\end{prop}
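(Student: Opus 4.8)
The plan is to replace $\mathcal{O}_{\Lambda}(\mathbb{K}^n)$ by its quantum torus, identify the centre of that torus as the group algebra of a finite-index sublattice of $\mathbb{Z}^n$, and reduce the problem to computing the index of this sublattice. Recall that for a prime affine PI $\mathbb{K}$-algebra $A$ the PI degree equals the degree of the quotient division algebra $\cf(A)$, that is, $\sqrt{[\cf(A):Z(\cf(A))]}$; in particular it is unchanged under localization at a central Ore set. Inverting the generators $x_1,\dots,x_n$ turns $\mathcal{O}_{\Lambda}(\mathbb{K}^n)$ into the quantum torus $T=\bigoplus_{\mathbf a\in\mathbb{Z}^n}\mathbb{K}\,x^{\mathbf a}$, still a Noetherian domain with the same quotient division ring, hence the same PI degree. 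Using that the monomials multiply by a bicharacter, $x^{\mathbf a}x^{\mathbf b}=q^{\,\mathbf a^{\mathrm{T}}H\mathbf b}\,x^{\mathbf b}x^{\mathbf a}$, one sees that $x^{\mathbf a}$ is central in $T$ precisely when $H\mathbf a\in l\mathbb{Z}^n$. Thus $Z(T)=\mathbb{K}[L]$ is a Laurent polynomial ring on the finite-index sublattice $L=\{\mathbf a\in\mathbb{Z}^n:H\mathbf a\in l\mathbb{Z}^n\}$, over which $T$ is free of rank $[\mathbb{Z}^n:L]$. By Posner's theorem $\cf(T)=T\otimes_{Z(T)}\cf(Z(T))$ is then central simple of dimension $[\mathbb{Z}^n:L]$ over its centre, and being a localization of a domain it is a division algebra; this gives $\pideg\mathcal{O}_{\Lambda}(\mathbb{K}^n)=\sqrt{[\mathbb{Z}^n:L]}$. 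This last assertion is exactly the De Concini--Procesi algorithm \cite[Proposition 7.1]{di}, which may simply be cited.

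It remains to evaluate $[\mathbb{Z}^n:L]$ in terms of the invariant factors. Writing $H=UDV$ with $U,V\in GL_n(\mathbb{Z})$ and $D=\diagonal(\delta_1,\dots,\delta_n)$ the Smith normal form of $H$, applying $U^{-1}$ and the change of coordinates $\mathbf c=V\mathbf a$ identifies $L$ with $\{\mathbf c\in\mathbb{Z}^n:\delta_ic_i\in l\mathbb{Z}\ \text{for all}\ i\}$, so
\[
[\mathbb{Z}^n:L]=\prod_{i=1}^{n}\bigl[\mathbb{Z}:\{c\in\mathbb{Z}:\delta_ic\in l\mathbb{Z}\}\bigr]=\prod_{\delta_i\neq 0}\frac{l}{\gcdi(\delta_i,l)}.
\]
Here I use the classification of alternating bilinear forms over the PID $\mathbb{Z}$: the skew-symmetric matrix $H$ is congruent to $\bigoplus_{i=1}^{s}\left(\begin{smallmatrix}0&h_i\\-h_i&0\end{smallmatrix}\right)\oplus 0_{n-2s}$ with $h_1\mid\cdots\mid h_s$ and $h_s\neq0$; permuting rows and columns shows $H$ is \emph{equivalent} to $\diagonal(h_1,h_1,h_2,h_2,\dots,h_s,h_s,0,\dots,0)$, which is already in Smith normal form. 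Hence $\ran(H)=2s$, the nonzero invariant factors of $H$ are exactly $h_1,h_1,\dots,h_s,h_s$, and substituting into the display gives $[\mathbb{Z}^n:L]=\prod_{i=1}^{s}\bigl(l/\gcdi(h_i,l)\bigr)^{2}$. Taking square roots yields $\pideg\mathcal{O}_{\Lambda}(\mathbb{K}^n)=\prod_{i=1}^{s}l/\gcdi(h_i,l)$, as claimed.

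I expect the delicate points to be structural rather than computational. The first is the passage from $\mathcal{O}_{\Lambda}(\mathbb{K}^n)$ through its quantum torus to $\cf$, together with the statement that $\cf(T)$ is central simple of dimension $[\mathbb{Z}^n:L]$ over its centre and that $\pideg$ therefore equals $\sqrt{[\mathbb{Z}^n:L]}$; this is the content of \cite[Proposition 7.1]{di} and should be invoked rather than reproved. The second is getting the \emph{pairing} of the invariant factors right: one must use the symplectic \emph{congruence} normal form of the skew-symmetric matrix $H$ over $\mathbb{Z}$ --- not merely its Smith form --- to see that the nonzero elementary divisors come in equal pairs, since this pairing is precisely what makes the index a perfect square and produces each factor $l/\gcdi(h_i,l)$ once rather than twice. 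Everything else --- the Smith-form index computation and the identity $\ord(q^{h})=l/\gcdi(h,l)$ lurking inside it --- is routine. (Alternatively, after putting $H$ in symplectic congruence form one may realize $\mathcal{O}_{\Lambda}(\mathbb{K}^n)$ birationally as $\mathbb{K}_{q^{h_1}}[y_1,z_1]\otimes_{\mathbb{K}}\cdots\otimes_{\mathbb{K}}\mathbb{K}_{q^{h_s}}[y_s,z_s]$ tensored with a commutative polynomial ring, compute $\pideg\mathbb{K}_{q^{h}}[y,z]=\ord(q^{h})=l/\gcdi(h,l)$, and use multiplicativity of the PI degree under tensor products over the algebraically closed field $\mathbb{K}$.)
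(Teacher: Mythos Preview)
The paper does not give its own proof of this proposition: it is stated with the citation \cite[Lemma 5.7]{ar} and used as a black box, the surrounding text only remarking that it ``simplifies the key algorithm \cite[Proposition 7.1]{di} through the properties of the integral matrix associated with $\Lambda$.'' So there is nothing to compare against here.

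That said, your argument is correct and is exactly the standard route to this result. You pass to the quantum torus (same fraction division ring, hence same PI degree), identify its centre with the group algebra of the sublattice $L=\{\mathbf a:H\mathbf a\in l\mathbb{Z}^n\}$, invoke De Concini--Procesi \cite[Proposition 7.1]{di} to get $\pideg=\sqrt{[\mathbb{Z}^n:L]}$, and then compute that index via the Smith normal form of $H$. The one genuinely nontrivial input you correctly flag is that for a skew-symmetric integer matrix the nonzero invariant factors occur in equal pairs $h_1,h_1,\dots,h_s,h_s$; this is the symplectic congruence normal form over $\mathbb{Z}$, and it is precisely what makes $[\mathbb{Z}^n:L]$ a perfect square. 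Your alternative tensor-factorisation argument at the end is also valid and is another common way to present the same computation. In short: nothing is missing, and this is almost certainly the proof appearing in the cited reference.
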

Assume that $\alpha$ and $\beta$ are primitive $m$-th and $n$-th roots of unity, respectively. In the following, we explicitly determine the PI degree of $M_2(\alpha,\beta)$ depending on the nature of $\alpha$ and $\beta$.
\subsection{PI degree of $M_2(\alpha,\beta)$ when $\alpha\beta=1$}\label{piisoqa} In this case, by the defining relations, the algebra $M_2(\alpha,\alpha^{-1})$ becomes a quantum affine space $\mathcal{O}_{\Lambda}(\mathbb{K}^4)$ where the $(4\times 4)$-matrix $\Lambda$ of relation is
\[\Lambda:=\begin{pmatrix}
  1& \alpha^{-1}&\alpha&1\\
  \alpha&1&\alpha^2&\alpha\\
  \alpha^{-1}&\alpha^{-2}&1&\alpha^{-1}\\
  1&\alpha^{-1}&\alpha&1
\end{pmatrix}.\]
It is easily verified that the skew-symmetric integral matrix associated with $\Lambda$ has the rank $2$ and has a kernel of dimension $2$ and one pair of invariant factors $\ h_1=1$. Thus by Proposition \ref{mainpi}, the PI degree of $M_2(\alpha,\alpha^{-1})$ is $m=\ord(\alpha)$. 
\subsection{PI degree of $M_2(\alpha,\beta)$ when $\alpha\beta\neq 1$}\label{subsectionpi} 
The algebra $M_2(\alpha,\beta)$ has an iterated skew polynomial presentation 
\[\mathbb{K}[X_{11}][X_{12},\sigma_{12}][X_{21},\sigma_{21}][X_{22},\sigma_{22},\delta_{22}]\]
twisted by automorphisms and derivations as mentioned in (\ref{spp}). Now we can simplify the following
\[\delta_{22}(\sigma_{22}(X_{11}))=\delta_{22}(X_{11})=(\beta-\alpha^{-1})X_{12}X_{21}=\alpha \beta\sigma_{22}(\delta_{22}(X_{11})).\]
This holds trivially if $X_{11}$ is replaced by $X_{12}$ or $X_{21}$. Therefore the pair $(\sigma_{22},\delta_{22})$ is a $\alpha\beta$-skew derivation on $\mathbb{K}[X_{11}][X_{12},\sigma_{12}][X_{21},\sigma_{21}]$ where $\alpha\beta\neq 1$. Moreover, we can verify that all the hypotheses of the derivation erasing process in \cite[Theorem 7]{lm2} are satisfied by the skew polynomial presentation of the PI algebra $M_2(\alpha,\beta)$ when $\alpha\beta\neq 1$. Thus it follows that
\begin{equation}\label{fpi}
    \pideg M_2(\alpha,\beta)=\pideg \mathcal{O}_{\Lambda}(\mathbb{K}^4)
\end{equation}
where the $(4\times 4)$-matrix $\Lambda$ of relation is
\[\Lambda:=\begin{pmatrix}
  1& \alpha^{-1}&\beta^{-1}&1\\
  \alpha&1&\beta^{-1}\alpha&\beta^{-1}\\
  \beta&\beta\alpha^{-1}&1&\alpha^{-1}\\
  1&\beta&\alpha&1
\end{pmatrix}.\]
Let us first consider the single-parameter case when $\beta=\alpha$. In this case, the skew-symmetric integral matrix associated with $\Lambda$ has the rank $2$ and one pair of invariant factors $h_1=1$. So by Proposition \ref{mainpi} and equality (\ref{fpi}), the PI degree of $M_2(\alpha,\alpha)$ becomes $m=\ord(\alpha)$. The PI degree for this single parameter case was computed over a field of characteristic zero in \cite{jkjz}.
\par Let us assume $\alpha\neq \beta^{\pm 1}$ in remainder of this section. Suppose $\Gamma$ is the multiplicative subgroup of $\mathbb{K}^*$ generated by $\alpha$ and $\beta$. Then $\Gamma$ becomes a cyclic group of order $l=\lcmu(m,n)$ with a generator $g$, say. Now we can choose $s_1=\frac{n}{\gcdi(m,n)}$ and $s_2=\frac{m}{\gcdi(m,n)}$ such that $\langle \alpha\rangle=\langle g^{s_1}\rangle$ and $\langle \beta\rangle=\langle g^{s_2}\rangle$. Therefore there exist integers $k_1$ and $k_2$ 
with $\gcdi(k_1,m)=\gcdi(k_2,n)=1$ such that 
\begin{equation}\label{cypq}
    \alpha=g^{s_1k_1}\ \ \text{and}\ \ \beta=g^{s_2k_2}.
\end{equation}
The skew-symmetric integral matrix associated with $\Lambda$ is
\[H:=\begin{pmatrix}
    0&-s_1k_1&-s_2k_2&0\\
    s_1k_1&0&s_1k_1-s_2k_2&-s_2k_2\\
    s_2k_2&-s_1k_1+s_2k_2&0&-s_1k_1\\
    0&s_2k_2&s_1k_1&0
\end{pmatrix}.\]
The determinant of $H$ is $\left((s_1k_1)^2-(s_2k_2)^2\right)^2$, which is nonzero as $\alpha\neq \beta^{\pm 1}$. So $\ran(H)=4$. Suppose $h_1$ and $h_2$ are invariant factors for $H$. In our context, applying Proposition \ref{mainpi} we have
\begin{equation}\label{pieq}
    \pideg M_2(\alpha,\beta)=\pideg \mathcal{O}_{\Lambda}(\mathbb{K}^4)=\frac{l}{\gcdi(h_1,l)}\times \frac{l}{\gcdi(h_2,l)}.  
\end{equation}
Now from the relations between the invariant factors and determinantal divisors, we have
\begin{align*}
h_1&=\text{first~determinantal~divisor~of~$H$}=\gcdi(s_1k_1,s_2k_2)\ \ \text{and}\\
h_2&=\displaystyle\frac{\text{fourth~determinantal~divisor~of~$H$}}{\text{third~determinantal~divisor~of~$H$}}\\
&=\displaystyle\frac{[(s_1k_1)^2-(s_2k_2)^2]^2}{\gcdi\big(s_1k_1[(s_1k_1)^2-(s_2k_2)^2],[(s_1k_1)^2-(s_2k_2)^2]s_2k_2\big)}\\&=\displaystyle\frac{(s_1k_1)^2-(s_2k_2)^2}{\gcdi\big(s_1k_1,s_2k_2\big)}.
\end{align*}
Note that $h_1h_2=(s_1k_1)^2-(s_2k_2)^2$. We will use the expressions $h_1$ and $h_2$ in the following. 
\par We now claim that $\gcdi(h_1,l)=1$. In fact, suppose $\gcdi(h_1,l)=d$. Then $d$ divides $s_1k_1,s_2k_2$ and $l$. Now we compute
\[\alpha^{\frac{l}{d}}=g^{\frac{s_1k_1l}{d}}=1\ \ \text{and}\ \ \beta^{\frac{l}{d}}=g^{\frac{s_2k_2l}{d}}=1.\] This implies that $m$ and $n$ both divide $\frac{l}{d}$ and hence $l=\lcmu(m,n)$ divides $\frac{l}{d}$. Therefore we obtain $\gcdi(h_1,l)=1$.
\par Under the above setting, we obtain that
\begin{equation}\label{sif}
\gcdi(h_2,l)=\gcdi\left(h_1h_2,l\right)=\gcdi\left((s_1k_1+s_2k_2)(s_1k_1-s_2k_2),l\right)    
\end{equation}
Since $h_1=\gcdi(s_1k_1,s_2k_2),$ therefore 
\begin{equation}\label{2nd}
    \gcdi(s_1k_1+s_2k_2,s_1k_1-s_2k_2)=h_1\ \text{or}\ 2h_1
\end{equation} 
Now we can consider the following cases depending on the nature of $m$ and $n$. For any positive integer $k$, let $e_k(p)$ denote the exponent of the prime $p$ in the prime factorization of $k$. Note that $e_k(p)=0$ if $p$ is not a prime factor of $k$.\\
\textbf{Case I:} Suppose that both $m$ and $n$ odd. Then $l$ becomes odd and hence using (\ref{2nd}) in (\ref{sif}) we have
\[\gcdi(h_2,l)=\gcdi(s_1k_1+s_2k_2,l)\gcdi(s_1k_1-s_2k_2,l).\]
Therefore it follows from (\ref{pieq}) that
\[\pideg M_2(\alpha,\beta)=\frac{l}{\gcdi(s_1k_1+s_2k_2,l)}\cdot \frac{l}{\gcdi(s_1k_1-s_2k_2,l)}=\ord(\alpha\beta)\ord(\alpha\beta^{-1}).\]
\textbf{Case II:} Suppose that $e_{m}(2)\neq e_{n}(2)$. For simplicity assume that $e_{m}(2)<e_{n}(2)$. Then by our choice, both $s_2$ and $k_2$ are odd, and $s_1$ is even. This implies that both $s_1k_1\pm s_2k_2$ are odd. Consequently, using (\ref{2nd}) in (\ref{sif}) we have $\gcdi(h_2,l)=\gcdi(s_1k_1+s_2k_2,l)\gcdi(s_1k_1-s_2k_2,l)$ and hence from (\ref{pieq})
\[\pideg M_2(\alpha,\beta)=\ord(\alpha\beta)\ord(\alpha\beta^{-1}).\]
\textbf{Case III:} Suppose that $e_{m}(2)=e_n(2)\geq 1$. Then $l$ becomes even and $s_1,s_2,k_1,k_2$ are odd. 
This implies that one of $(s_1k_1\pm s_2k_2)$ is an odd multiple of $2$, while the other is a multiple of $4$. \\
\textsc{Subcase A:} Suppose that $e_m(2)=e_n(2)=1$. Then by (\ref{cypq}), both $\ord(\alpha\beta)$ and $\ord(\alpha\beta^{-1})$ are odd. Now we simplify (\ref{sif}) as follows
\begin{align*}
    \gcdi(h_2,l)&=\gcdi\left((s_1k_1+s_2k_2)(s_1k_1-s_2k_2),l\right) \\
    &=\begin{cases}
        2\gcdi\left(\frac{s_1k_1+s_2k_2}{2}(s_1k_1-s_2k_2),\frac{l}{2}\right),&\text{if}\ \ e_{s_1k_1+s_2k_2}(2)=1\\
        2\gcdi\left((s_1k_1+s_2k_2)\frac{s_1k_1-s_2k_2}{2},\frac{l}{2}\right),&\text{if}\ \ e_{s_1k_1-s_2k_2}(2)=1
    \end{cases}\\
    &=\begin{cases}
        \gcdi(s_1k_1+s_2k_2,l)\gcdi(s_1k_1-s_2k_2,\frac{l}{2}),&\text{if}\ \ e_{s_1k_1+s_2k_2}(2)=1\\
        \gcdi(s_1k_1-s_2k_2,l)\gcdi(s_1k_1+s_2k_2,\frac{l}{2}),&\text{if}\ \ e_{s_1k_1-s_2k_2}(2)=1
    \end{cases}
\end{align*}
This implies from (\ref{pieq}) that 
\[\pideg M_2(\alpha,\beta)=2\ord(\alpha \beta)\ord(\alpha \beta^{-1}).\]
\textsc{Subcase B:} Suppose that $e_m(2)=e_n(2)>1$. Then by (\ref{cypq}), $\text{ord}(\alpha\beta)$ or $\text{ord}(\alpha\beta^{-1})$ is even with the index of $2$ is less than the index of $2$ in $l$. Now we simplify (\ref{sif}) as follows
\begin{align*}
    \gcdi(h_2,l)&=\gcdi\left((s_1k_1+s_2k_2)(s_1k_1-s_2k_2),l\right) \\
    &=4\gcdi\left(\frac{s_1k_1+s_2k_2}{2}\frac{s_1k_1-s_2k_2}{2},\frac{l}{4}\right)\\
    &=\gcdi\left(s_1k_1+s_2k_2,\frac{l}{2}\right)\gcdi\left(s_1k_1-s_2k_2,\frac{l}{2}\right)
\end{align*}
This implies from (\ref{pieq}) that 
\begin{align*}
    \pideg M_2(\alpha,\beta)&=4\ord((\alpha \beta)^2)\ord((\alpha \beta^{-1})^2)\\
    &=\begin{cases}
        \ord(\alpha\beta)\ord(\alpha\beta^{-1}),& \text{when}\ \ord(\alpha\beta) \ \text{and} \ \ord(\alpha\beta^{-1})\ \text{even}\\  
        2\ord(\alpha\beta)\ord(\alpha\beta^{-1}),& \text{when}\ \{\ord(\alpha\beta),\ord(\alpha\beta^{-1})\}=\{\text{even,\ odd}\}
    \end{cases}
\end{align*}
\begin{rema}
It is important to note that in Case III where $e_2(m) = e_2(n) \geq 1$, if $\ord(\alpha\beta)$ or $\ord(\alpha\beta^{-1})$ is odd, then $\ord(\beta)$ (and $\ord(\alpha)$) does not divide $\ord(\alpha\beta)\ord(\alpha\beta^{-1})$. However, in all other cases, $\ord(\beta)$ (and $\ord(\alpha)$) divides $\ord(\alpha\beta)\ord(\alpha\beta^{-1})$. Moreover, $\ord(\beta)$ and $\ord(\alpha)$ divide simultaneously $\ord(\alpha\beta)\ord(\alpha\beta^{-1})$. 
\end{rema}
Thus combining the above three cases we obtain the following result.
\begin{theo}\label{pithm}
    Let $\alpha,\beta\in \mathbb{K}^*$ such that $\alpha\neq \beta^{\pm 1}$. Then the PI degree of $M_2(\alpha,\beta)$ is 
    \[\pideg M_2(\alpha,\beta)=\begin{cases}
        \ord(\alpha\beta)\ord(\alpha\beta^{-1}),& \text{if}\ \ord(\beta)\divides \ord(\alpha\beta)\ord(\alpha\beta^{-1})\\  
        2\ord(\alpha\beta)\ord(\alpha\beta^{-1}),& \text{otherwise.}
    \end{cases}\]
\end{theo}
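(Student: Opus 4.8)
The plan is to deduce the statement simply by assembling Cases I, II and III established above; the only genuine remaining work is to see that the piecewise answer obtained there is governed by the single divisibility $\ord(\beta)\divides \ord(\alpha\beta)\ord(\alpha\beta^{-1})$. First I would record the outcome of the case analysis: in Cases I and II --- i.e.\ when $m$ and $n$ are both odd, or when $e_m(2)\neq e_n(2)$ --- one has $\pideg M_2(\alpha,\beta)=\ord(\alpha\beta)\ord(\alpha\beta^{-1})$, whereas the extra factor $2$ appears only within Case III, namely in Subcase A (where $e_m(2)=e_n(2)=1$, in which case both $\ord(\alpha\beta)$ and $\ord(\alpha\beta^{-1})$ turn out to be odd) and in Subcase B (where $e_m(2)=e_n(2)>1$) exactly when precisely one of $\ord(\alpha\beta),\ord(\alpha\beta^{-1})$ is odd. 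Hence the factor $2$ occurs if and only if $e_m(2)=e_n(2)\geq 1$ and at least one of $\ord(\alpha\beta),\ord(\alpha\beta^{-1})$ is odd.

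The second and main step is the arithmetic equivalence
\[ \bigl(e_m(2)=e_n(2)\geq 1 \ \text{and one of}\ \ord(\alpha\beta),\,\ord(\alpha\beta^{-1})\ \text{is odd}\bigr)\ \Longleftrightarrow\ \ord(\beta)\notdivides \ord(\alpha\beta)\ord(\alpha\beta^{-1}). \]
Working with the cyclic presentation $\alpha=g^{s_1k_1},\ \beta=g^{s_2k_2}$ of the group generated by $\alpha,\beta$, where $\ord(g)=l$, one has $\ord(\alpha\beta^{\pm 1})=l/\gcdi(s_1k_1\pm s_2k_2,\,l)$ and $\ord(\beta)=l/\gcdi(s_2k_2,l)=l/s_2=n$, so the claim reduces to comparing $p$-adic valuations of $s_1k_1\pm s_2k_2$ with those of $l$. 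I would verify prime by prime that $e_{\ord(\beta)}(p)\leq e_{\ord(\alpha\beta)}(p)+e_{\ord(\alpha\beta^{-1})}(p)$ for every prime $p$, with the sole possible exception of $p=2$ in Case III. The key inputs are the already-established fact $\gcdi(h_1,l)=1$ with $h_1=\gcdi(s_1k_1,s_2k_2)$, together with $\gcdi(s_1k_1+s_2k_2,\,s_1k_1-s_2k_2)\in\{h_1,2h_1\}$ from (\ref{2nd}): for an odd prime $p\divides n$ these force $\min\{e_{s_1k_1+s_2k_2}(p),\,e_{s_1k_1-s_2k_2}(p)\}=0$, so one of the two factors is prime to $p$ and already contributes the full $e_l(p)\geq e_n(p)$ to the sum of valuations; and the same argument handles $p=2$ unless $s_1k_1$ and $s_2k_2$ are both odd, which is precisely Case III. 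There the $2$-adic bookkeeping already carried out in Subcases A and B --- tracking which of $s_1k_1\pm s_2k_2$ is $\equiv 2\pmod 4$ and which is $\equiv 0\pmod 4$, against $e_l(2)=e_n(2)$ --- shows that $e_{\ord(\alpha\beta)\ord(\alpha\beta^{-1})}(2)<e_n(2)$ exactly when one of the two orders is odd.

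Combining the two steps gives the dichotomy in the statement: the factor $2$ in the PI degree occurs if and only if $\ord(\beta)\notdivides \ord(\alpha\beta)\ord(\alpha\beta^{-1})$, and otherwise $\pideg M_2(\alpha,\beta)=\ord(\alpha\beta)\ord(\alpha\beta^{-1})$ (by the $\alpha\leftrightarrow\beta$ symmetry of the defining relations one could equally state the condition with $\ord(\alpha)$). I expect the main obstacle to be precisely this $2$-adic bookkeeping inside Case III: the odd-prime part follows cleanly from $\gcdi(h_1,l)=1$, but at $p=2$ one must track carefully how the $2$-adic valuations of the orders $\ord(\alpha\beta^{\pm1})$ sit relative to $e_l(2)$, which is exactly where Subcases A and B diverge. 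Once that is in place, the theorem is nothing more than the union of Cases I--III.
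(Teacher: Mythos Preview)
Your proposal is correct and follows essentially the same route as the paper: the theorem is obtained by assembling Cases I--III and then translating the resulting piecewise answer into the single divisibility criterion $\ord(\beta)\divides \ord(\alpha\beta)\ord(\alpha\beta^{-1})$. In fact you are more explicit than the paper on this last step --- the paper simply asserts the equivalence in a remark without the prime-by-prime justification you outline --- so your plan, if carried out, would yield a slightly more complete argument.
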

\begin{coro}
    In the single-parameter case where $\beta=\alpha^{\pm 1}$, the PI degree of $M_2(\alpha,\alpha^{\pm 1})$ is $m=\ord(\alpha)$. This can be rewritten as \[\pideg M_2(\alpha,\alpha^{\pm 1})=\begin{cases}
        \ord(\alpha^2),&\ \ \text{if} \ \ \ord(\alpha)\divides \ord(\alpha^2)\\
        2\ord(\alpha^2),&\ \ \text{otherwise.}
    \end{cases}\]
    This expression aligns with Theorem \ref{pithm} by taking $\beta=\alpha^{\pm 1}$.
\end{coro}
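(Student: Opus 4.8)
The plan is to split the Corollary into its two assertions: first, that $\pideg M_2(\alpha,\alpha^{\pm 1})=m=\ord(\alpha)$, and second, that this value equals the piecewise expression in $\ord(\alpha^2)$, which is precisely the formal specialization of Theorem \ref{pithm}. The first assertion needs no new computation, since the two single-parameter situations have already been resolved in the body of this section. When $\beta=\alpha^{-1}$, Subsection \ref{piisoqa} realizes $M_2(\alpha,\alpha^{-1})$ as a quantum affine space whose associated skew-symmetric integral matrix has rank $2$ and one pair of invariant factors $h_1=1$; as every entry of $\Lambda$ is a power of the primitive $m$-th root $\alpha$, Proposition \ref{mainpi} yields $\pideg=m/\gcdi(1,m)=m$. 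When $\beta=\alpha$ and $\alpha^2\neq 1$, the matrix $\Lambda$ in Subsection \ref{subsectionpi} is again a single-parameter quantum affine space whose integral matrix has rank $2$ and invariant factor $h_1=1$, so combining Proposition \ref{mainpi} with the derivation-erasing equality \eqref{fpi} gives $\pideg M_2(\alpha,\alpha)=m$. The degenerate boundary $\alpha^2=1$ forces $\alpha=\alpha^{-1}$, so the two specializations coincide and the value $m=2$ is already supplied by the quantum-affine-space computation above.

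For the second assertion I would record the elementary identity $\ord(\alpha^2)=m/\gcdi(m,2)$ and split on the parity of $m$. If $m$ is odd then $\ord(\alpha^2)=m$, so $\ord(\alpha)\mid\ord(\alpha^2)$ and the top branch returns $\ord(\alpha^2)=m$; if $m$ is even then $\ord(\alpha^2)=m/2$, so $\ord(\alpha)=m\nmid m/2=\ord(\alpha^2)$ and the bottom branch returns $2\ord(\alpha^2)=m$. In both cases the rewritten expression evaluates to $m$, matching the first assertion, and we see that the divisibility criterion $\ord(\alpha)\mid\ord(\alpha^2)$ is simply the statement that $m$ is odd.

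Finally, to display the alignment with Theorem \ref{pithm}, I would substitute $\beta=\alpha^{\pm 1}$ into its formula. In either case one factor collapses: for $\beta=\alpha$ we have $\alpha\beta^{-1}=1$, hence $\ord(\alpha\beta^{-1})=1$ and $\ord(\alpha\beta)=\ord(\alpha^2)$, while for $\beta=\alpha^{-1}$ we have $\alpha\beta=1$, hence $\ord(\alpha\beta)=1$ and $\ord(\alpha\beta^{-1})=\ord(\alpha^2)$. In both instances $\ord(\alpha\beta)\ord(\alpha\beta^{-1})=\ord(\alpha^2)$, and the condition $\ord(\beta)\mid\ord(\alpha\beta)\ord(\alpha\beta^{-1})$ reads $\ord(\alpha)\mid\ord(\alpha^2)$, so the theorem's expression specializes exactly to the piecewise formula of the Corollary. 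The one point to treat with care is that Theorem \ref{pithm} carries the standing hypothesis $\alpha\neq\beta^{\pm 1}$, which fails here; I would therefore stress that the genuine value $m$ comes from the direct computation of the first step, while the substitution into Theorem \ref{pithm} is offered only as a formal consistency check that recovers the same number.
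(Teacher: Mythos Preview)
Your proposal is correct and matches the paper's approach: the paper states this corollary without proof, treating it as an immediate consequence of the computations already carried out in Subsections~\ref{piisoqa} and~\ref{subsectionpi} together with the elementary parity observation $\ord(\alpha^2)=m/\gcdi(m,2)$. Your write-up simply makes explicit the verifications the paper leaves to the reader, including the careful remark that the substitution into Theorem~\ref{pithm} is only a formal consistency check since its hypothesis $\alpha\neq\beta^{\pm 1}$ is violated.
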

\begin{coro}
Suppose $m=\ord(\alpha)$ and $n=\ord(\beta)$ such that $\gcd(m,n)=1$, then $\ord(\alpha\beta)=\ord(\alpha\beta^{-1})=l(=mn)$. In this setting, by Theorem \ref{pithm}, the $\pideg M_2(\alpha,\beta)=l^2$. In particular, the Dipper-Donkin quantum matrix algebra (where $\alpha=1,\beta=q$) satisfies this assumption and its PI degree was studied in \cite{ddjj,sbsm}.  
\end{coro}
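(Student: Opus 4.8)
The plan is to reduce the corollary entirely to Theorem~\ref{pithm}; the only genuine content is an elementary order computation inside the abelian group $\mathbb{K}^*$. First I would establish the equality $\ord(\alpha\beta)=\ord(\alpha\beta^{-1})=mn=l$, and then read off the PI degree from the theorem after confirming that its hypotheses hold.

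For the order computation, write $t=\ord(\alpha\beta)$. On the one hand $(\alpha\beta)^{mn}=(\alpha^m)^n(\beta^n)^m=1$, so $t\mid mn$. On the other hand, from $\alpha^t\beta^t=1$ we get $\alpha^t=\beta^{-t}$; raising to the $n$-th power gives $\alpha^{tn}=(\beta^n)^{-t}=1$, whence $m\mid tn$, and raising to the $m$-th power gives $\beta^{-tm}=(\alpha^m)^t=1$, whence $n\mid tm$. Since $\gcd(m,n)=1$, these force $m\mid t$ and $n\mid t$, so $mn=\lcmu(m,n)\mid t$; combined with $t\mid mn$ this yields $t=mn$. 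Because $\ord(\beta^{-1})=\ord(\beta)=n$ is again coprime to $m$, the identical argument applied to $\alpha\beta^{-1}$ gives $\ord(\alpha\beta^{-1})=mn$. As $l=\lcmu(m,n)=mn$ under coprimality, this proves the stated equalities.

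It then remains to invoke Theorem~\ref{pithm}, whose standing hypothesis is $\alpha\neq\beta^{\pm1}$. Apart from the degenerate case $m=n=1$ (where $\alpha=\beta=1$ and $M_2(1,1)$ is commutative, so the formula $\pideg=l^2=1$ holds trivially), coprimality forces $m\neq n$; since $\ord(\beta)=\ord(\beta^{-1})=n\neq m=\ord(\alpha)$, we indeed have $\alpha\neq\beta$ and $\alpha\neq\beta^{-1}$, so the theorem applies. Now $\ord(\alpha\beta)\ord(\alpha\beta^{-1})=(mn)^2=l^2$, and $\ord(\beta)=n$ plainly divides $(mn)^2$, so the first branch of the piecewise formula is selected, giving $\pideg M_2(\alpha,\beta)=l^2$. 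The Dipper--Donkin specialization $\alpha=1,\ \beta=q$ is a special case, since $m=\ord(1)=1$ is automatically coprime to $n=\ord(q)$. There is no serious obstacle here: the argument is a short divisibility computation followed by a citation, and the only points requiring care are verifying that the hypothesis $\alpha\neq\beta^{\pm1}$ of Theorem~\ref{pithm} is automatic under coprimality, and checking the divisibility $n\mid(mn)^2$ that pins down the correct branch.
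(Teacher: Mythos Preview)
Your proposal is correct and matches the paper's approach: the paper states this corollary without proof, treating both the order identity $\ord(\alpha\beta)=\ord(\alpha\beta^{-1})=mn$ and the application of Theorem~\ref{pithm} as immediate, so you have simply supplied the routine details the paper leaves implicit. Your care in verifying the hypothesis $\alpha\neq\beta^{\pm1}$ and in selecting the correct branch of the piecewise formula is appropriate and accurate.
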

\section{Construction of Simple Modules over $M_2(\alpha,\beta)$}\label{seccon}
In this section, we construct three different types 
simple $M_2(\alpha,\beta)$-modules when $\alpha\beta\neq 1$. Define \begin{align*}
 l_1:=\begin{cases}
     \ord(\alpha\beta),& \text{if $\ord(\beta)\divides \ord(\alpha\beta)\ord(\alpha\beta^{-1})$}\\
     2\ord(\alpha\beta),& \text{otherwise}
 \end{cases}&\ \ \text{and}\ \ l_2:=\ord(\alpha\beta^{-1}).
\end{align*} 
With the notation above we are ready to define simple $M_2(\alpha,\beta)$-modules.\\
\textbf{Simple modules of type $\mathscr{V}_1(\underline{\mu})$:} For $\underline{\mu}:=(\mu_1,\mu_2,\mu_3,\mu_4)\in \mathbb{({K}^*)}^4$, let us consider the $\mathbb{K}$-vector space $\mathscr{V}_1(\underline{\mu})$ with basis $\{e(a,b):0 \leq a \leq l_1-1, 0 \leq b \leq l_2-1\}$. Define an $M_2(\alpha,\beta)$-module structure on the $\mathbb{K}$-space $\mathscr{V}_1(\underline{\mu})$ as follows: 
\begin{align*}
    e(a,b)X_{11}&=\mu_1 \beta^b e(a\oplus 1,b)\\
    e(a,b)X_{12}&=\begin{cases}
        \mu_2^{-1}\mu_3 (\alpha^{-1}\beta)^b(\alpha\beta)^{-a}e(a,b-1)&\text{if}\ b\neq 0\\
        \mu_2^{-1}\mu_3 \beta^{al_2}(\alpha\beta)^{-a}e(a,l_2-1)&\text{if}\ b=0
    \end{cases}\\
    e(a,b)X_{21}&=\begin{cases}
        \mu_2 e(a,b+1)&\text{if}\ b\neq l_2-1\\
        \mu_2 \beta^{-al_2} e(a,0)&\text{if}\ b=l_2-1
    \end{cases}\\
    e(a,b)X_{22}&=\mu_1^{-1}\alpha^{-b}\left[\mu_4+\beta(\alpha\beta)^{-a} \mu_3\right]e(a\oplus (-1),b)
\end{align*}
where $\oplus$ is the addition modulo $l_1$.
 To establish the well-definedness we need to check that the $\mathbb{K}$-endomorphisms of $\mathscr{V}_1(\underline{\mu})$ defined by the above rules satisfy the defining relations of $M_2(\alpha,\beta)$. Indeed with the above actions we have the following computation:
 \begin{align*}
     e(a,b)(X_{22}X_{11}-X_{11}X_{22})&=e(a,b)X_{22}X_{11}-e(a,b)X_{11}X_{22}\\
    &=(\alpha^{-1}\beta)^b\left[\mu_4+\beta(\alpha\beta)^{-a} \mu_3\right]e(a,b)\\
    &\ \ \ -(\alpha^{-1}\beta)^b\left[\mu_4+\beta(\alpha\beta)^{-a-1} \mu_3\right]e(a,b)\\
     &=\mu_3(\alpha\beta)^{-a}(\alpha^{-1}\beta)^b(\beta-\alpha^{-1})e(a,b)\\
     &=(\beta-\alpha^{-1})e(a,b)X_{12}X_{21}.
 \end{align*}
The other relations are easy to verify. Thus $\mathscr{V}_1(\underline{\mu})$ is an $M_2(\alpha,\beta)$-module.
\par We can easily verify the following with the above actions
\begin{equation}\label{eigen}
  e(a,b)X_{12}X_{21}=\mu_3(\alpha^{-1}\beta)^b(\alpha\beta)^{-a} e(a,b)\ \text{and}\ \ e(a,b)D=\mu_4(\alpha^{-1}\beta)^{b}e(a,b).  
\end{equation}
Thus each $e(a,b)$ is an eigenvector under the actions of $X_{12}X_{21}$ and $D$.
 \begin{theo}\label{f1}
The module $\mathscr{V}_1(\underline{\mu})$ is a simple $M_2(\alpha,\beta)$-module of dimension $l_1l_2$.
\end{theo}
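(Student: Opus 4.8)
The plan is to show $\mathscr{V}_1(\underline{\mu})$ is nonzero of the claimed dimension and has no proper nonzero submodule. The dimension statement is immediate from the construction: the basis $\{e(a,b):0\leq a\leq l_1-1,\ 0\leq b\leq l_2-1\}$ has $l_1 l_2$ elements, and one only needs to know the module is nonzero, which is clear since all the structure constants appearing in the action formulas are units in $\mathbb{K}^*$ (they are products of the $\mu_i^{\pm1}$ and powers of the roots of unity $\alpha,\beta$). So the whole content is simplicity.

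For simplicity I would argue as follows. Let $N$ be a nonzero submodule and pick $0\neq v\in N$. The key observation is the eigenvector computation \eqref{eigen}: every basis vector $e(a,b)$ is a simultaneous eigenvector for the commuting operators $X_{12}X_{21}$ and $D$, with $e(a,b)D=\mu_4(\alpha^{-1}\beta)^b e(a,b)$. Since $\alpha^{-1}\beta$ has order $l_2=\ord(\alpha\beta^{-1})$, the $D$-eigenvalues $\mu_4(\alpha^{-1}\beta)^b$ for $0\leq b\leq l_2-1$ are pairwise distinct; hence the $D$-eigenspaces are exactly the subspaces $W_b:=\mathrm{span}\{e(a,b):0\leq a\leq l_1-1\}$. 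A standard argument then shows any submodule is spanned by the $e(a,b)$ it contains: writing $v=\sum c_{a,b}e(a,b)$ and projecting onto $D$-eigenspaces (by applying a suitable polynomial in $D$, or by a Vandermonde argument on powers of $D$) we may assume $v\in W_b$ for a single $b$; then one separates the $a$-indices within $W_b$. To do the latter, note $X_{12}X_{21}$ acts on $e(a,b)$ by the scalar $\mu_3(\alpha^{-1}\beta)^b(\alpha\beta)^{-a}$, but the factor $(\alpha\beta)^{-a}$ need not separate all $a$ since $\ord(\alpha\beta)$ may be smaller than $l_1$. Instead I use that $X_{11}$ acts (up to the unit $\mu_1\beta^b$) as the cyclic shift $e(a,b)\mapsto e(a\oplus 1,b)$ on $W_b$, which is invertible; combining an eigenvalue separation for $X_{12}X_{21}$ on the $\ord(\alpha\beta)$ many distinct values with the $X_{11}$-shift orbits, or more directly diagonalizing the operator $X_{11}^{?}$-twisted action, one extracts a single basis vector $e(a_0,b)\in N$.

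Once a single $e(a_0,b)\in N$, the action formulas generate everything: $X_{11}$ and $X_{22}$ act as invertible scalar multiples of the shifts $e(a,b)\mapsto e(a\pm1,b)$ (the $X_{22}$ coefficient $\mu_1^{-1}\alpha^{-b}[\mu_4+\beta(\alpha\beta)^{-a}\mu_3]$ is nonzero — here one needs $\mu_4+\beta(\alpha\beta)^{-a}\mu_3\neq 0$, which should be part of the standing hypotheses on $\underline{\mu}$, or else one uses $X_{11}$ alone for the $a$-direction), so from $e(a_0,b)$ we reach all $e(a,b)$ with the same $b$; and $X_{21}$ acts as an invertible scalar multiple of $e(a,b)\mapsto e(a,b\oplus_{l_2}1)$, cycling through all values of $b$. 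Hence $N$ contains every basis vector, so $N=\mathscr{V}_1(\underline{\mu})$.

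The main obstacle is the index-separation step in the $a$-direction: because $\ord(\alpha\beta)$ can be a proper divisor of $l_1$ (indeed $l_1=2\ord(\alpha\beta)$ in the exceptional case), the operator $X_{12}X_{21}$ alone does not have distinct eigenvalues on $W_b$, so one cannot simply read off individual basis vectors from eigenvalue considerations. The resolution is to exploit that $X_{11}$ acts as a full cyclic shift of order $l_1$ on each $W_b$: the subalgebra generated by $X_{11}$ and $X_{12}X_{21}$ restricted to $W_b$ is (a twist of) the group algebra of $\mathbb{Z}/l_1$ acting on its regular representation, which acts irreducibly on $W_b$ precisely when the shift is present — so in fact $W_b$ is already irreducible under $\{X_{11},X_{12}X_{21}\}$, and the reachability of a single $e(a_0,b)$ is automatic. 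I would phrase this cleanly by first diagonalizing $D$ to land in some $W_b$, then invoking irreducibility of the cyclic-shift action on $W_b$ to get all of $W_b\subseteq N$, and finally using the $X_{21}$-cycle on $b$ to sweep out the whole module.
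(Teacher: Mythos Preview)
Your $D$-eigenspace reduction to $W_b$ is fine and matches the paper. The gap is in the $a$-separation step when $l_1=2\ord(\alpha\beta)$. Your claim that $W_b$ is irreducible under the subalgebra generated by $X_{11}$ and $X_{12}X_{21}$ is false in that case. Concretely: on $W_b$, the operator $X_{11}$ is a nonzero scalar times the cyclic shift $e(a,b)\mapsto e(a\oplus 1,b)$, and $X_{12}X_{21}$ is diagonal with eigenvalue proportional to $(\alpha\beta)^{-a}$, which has period $k:=\ord(\alpha\beta)=l_1/2$. Then the $k$-dimensional subspace
\[
U_b:=\mathrm{span}\{\,e(a,b)+e(a+k,b):0\le a\le k-1\,\}\subset W_b
\]
is stable under both operators (the shift sends each such sum to the next, and $X_{12}X_{21}$ acts on $e(a,b)+e(a+k,b)$ by the common scalar $\mu_3(\alpha^{-1}\beta)^b(\alpha\beta)^{-a}$). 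So you cannot extract a single basis vector from $W_b$ using only $X_{11}$ and $X_{12}X_{21}$; your ``group algebra of $\mathbb{Z}/l_1$'' heuristic is the wrong picture---the regular representation of $\mathbb{Z}/l_1$ is never irreducible over an algebraically closed field, and adding a diagonal operator with repeated eigenvalues does not repair this.

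What actually breaks $U_b$ is the operator $X_{21}^{l_2}$, which acts diagonally on each $e(a,b)$ by $\mu_2^{l_2}\beta^{-a l_2}$. In the exceptional case one has $\ord(\beta)\nmid\ord(\alpha\beta)\ord(\alpha\beta^{-1})$ by the very definition of $l_1$, so $\beta^{-a l_2}\neq\beta^{-(a+k)l_2}$ and $X_{21}^{l_2}$ separates $e(a,b)$ from $e(a+k,b)$. This is precisely the extra operator the paper brings in (Subcase B of the proof): $D$ separates the $b$-index, $X_{12}X_{21}$ separates $a$ modulo $\ord(\alpha\beta)$, and $X_{21}^{l_2}$ finishes the job when $l_1=2\ord(\alpha\beta)$. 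Once you add this third diagonal operator your strategy goes through; without it the argument is incomplete.
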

\begin{proof}
Let $\mathscr{W}$ be a non-zero submodule of $\mathscr{V}_1(\underline{\mu})$. We claim that $\mathscr{W}$ contains a basis vector of the form $e(a,b)$. Indeed, any member $w\in \mathscr{W}$ is a finite $\mathbb{K}$-linear combination of such vectors. i.e.,
\[
  w:=\sum_{\text{finite}} \lambda_{ab}~e(a,b)  
\]
for some $\lambda_{ab}\in \mathbb{K}$. Suppose there exist two non-zero coefficients, say, $\lambda_{uv},\lambda_{xy}$. Here $(u,v)$ and $(x,y)$ are distinct pairs with $0\leq u,x \leq l_1-1$ and $0 \leq v,y \leq l_2-1$. Then in particular focus on the two distinct vectors $e(u,v)$ and $e(x,y)$.\\ 
\textbf{Case I:} Assume that $v\neq y$. Then (\ref{eigen}) shows that $e(u,v)$ and $e(x,y)$ are eigenvectors of the operator $D$ corresponding to distinct eigenvalues.\\
\textbf{Case II:} Assume that $v=y$. Since $u\neq x$ by our choice, we now consider two subcases based on $l_1$.\\ 
\textsc{Subcase A:} Suppose $l_1=\ord(\alpha\beta)$. This gives $u \not\equiv x\left(\mo \ord(\alpha\beta)\right)$ and then (\ref{eigen}) shows that $e(u,v)$ and $e(x,y)$ are eigenvectors of the operator $X_{12}X_{21}$ corresponding to distinct eigenvalues.\\
\textsc{Subcase B:} Let $l_1=2\ord(\alpha\beta)$. In this case, $\ord(\beta)$ does not divide $\ord(\alpha\beta)\ord(\alpha\beta^{-1})$. If $u \not\equiv x\left(\mo \ord(\alpha\beta)\right)$, then again using (\ref{eigen}) we show that $e(u,v)$ and $e(x,y)$ are eigenvectors of the operator $X_{12}X_{21}$ corresponding to distinct eigenvalues. Now consider $u \equiv x\left(\mo \ord(\alpha\beta)\right)$. Consequently $u-x=\ord(\alpha\beta)$ as $0\leq u\neq x\leq l_1-1$. Observe that
\[e(a,b)X_{21}^{l_2}=\mu_2^{l_2}\beta^{-al_2}e(a,b).\] This shows that $e(u,v)$ and $e(x,y)$ are eigenvectors of the operator $X_{21}^{l_2}$ corresponding to distinct eigenvalues. Indeed, \[\mu_2^{l_2}\beta^{-ul_2}=\mu_2^{l_2}\beta^{-xl_2} \implies \ord(\beta)\divides(u-x)l_2=\ord(\alpha\beta)\ord(\alpha\beta^{-1}),\] which is a contradiction.
\par Thus any case we show that $e(u,v)$ and $e(x,y)$ are eigenvectors of some operator corresponding to distinct eigenvalues. Using this fact we can reduce the length of the vector $w$ and hence by induction $\mathscr{W}$ contains a basis vector of the form $e(a,b)$. 
Consequently by the actions of the generators $X_{11}, X_{12}, X_{21}$ and $X_{22}$ on $e(a,b)$, we have $\mathscr{W}=\mathscr{V}_1(\underline{\mu})$. Thus $\mathscr{V}_1(\underline{\mu})$ is a simple $M_2(\alpha,\beta)$-module. 
\end{proof}
\textbf{Simple modules of type $\mathscr{V}_2(\underline{\mu})$:} For $\underline{\mu}:=(\mu_1,\mu_2,\mu_3)\in \mathbb{({K}^*)}^3 $, let us consider the $\mathbb{K}$-vector space $\mathscr{V}_2(\underline{\mu})$ with basis $\{e(a,b):0 \leq a \leq l_1-1, 0 \leq b \leq l_2-1\}$. Define an $M_2(\alpha,\beta)$-module structure on the $\mathbb{K}$-space $\mathscr{V}_2(\underline{\mu})$ as follows: 
\begin{align*}
e(a,b)X_{11}&=\begin{cases}
    \mu_1^{-1}\mu_3\alpha^{-1}(\alpha^{-1}\beta)^b[(\alpha\beta)^a-1]e(a-1,b)&\text{if}\ a\neq 0\\
    0& \text{if}\ a= 0
\end{cases}\\
e(a,b)X_{12}&=
    \mu_2^{-1}\mu_3\beta^a(\alpha^{-1}\beta)^be(a,b\dotplus(-1))\\
e(a,b)X_{21}&=\mu_2\alpha^ae(a,b\dotplus 1)\\
e(a,b)X_{22}&=\begin{cases}
    \mu_1e(a+1,b)&\text{if}\ a\neq l_1-1\\
    \mu_1\alpha^{-bl_1}e(0,b)&\text{if}\ a=l_1-1
\end{cases}
\end{align*}
where $\dotplus$ is the addition modulo $l_2$. We can readily confirm that the action above indeed defines an $M_2(\alpha,\beta)$-module structure on $\mathscr{V}_2(\underline{\mu})$, similarly as detailed in the previous case. 
\begin{theo}\label{f2}
The module $\mathscr{V}_2(\underline{\mu})$ is a simple $M_2(\alpha,\beta)$-module of dimension $l_1l_2$.
\end{theo}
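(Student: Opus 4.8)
The plan is to mirror the proof of Theorem \ref{f1}. Since the prescribed basis $\{e(a,b): 0\le a\le l_1-1,\ 0\le b\le l_2-1\}$ has exactly $l_1 l_2$ elements, the dimension claim follows at once from simplicity, so the whole task is to show that every nonzero submodule $\mathscr{W}\subseteq \mathscr{V}_2(\underline{\mu})$ equals $\mathscr{V}_2(\underline{\mu})$. As in the earlier argument this breaks into two steps: first exhibit a basis vector $e(a,b)\in\mathscr{W}$, then generate the whole module from it. The second step is routine: $X_{22}$ and $X_{21}$ act on basis vectors by nonzero scalars while cyclically permuting the first index modulo $l_1$ and the second index modulo $l_2$ respectively, so from any $e(a,b)$ one reaches every $e(a',b')$; hence $\mathscr{W}=\mathscr{V}_2(\underline{\mu})$ the moment it contains one basis vector.

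For the first step I would record the eigenvalue data needed to peel basis vectors off an arbitrary element of $\mathscr{W}$. A direct computation from the defining actions, using $D=X_{11}X_{22}-\alpha^{-1}X_{12}X_{21}$ together with $e(a,b)X_{11}X_{22}=\mu_3\alpha^{-1}(\alpha^{-1}\beta)^b[(\alpha\beta)^a-1]e(a,b)$, yields for all $a,b$
\[
e(a,b)X_{12}X_{21}=\mu_3(\alpha\beta)^a(\alpha^{-1}\beta)^b\,e(a,b),\qquad
e(a,b)D=-\mu_3\alpha^{-1}(\alpha^{-1}\beta)^b\,e(a,b),
\]
and, since $X_{21}$ only shifts the second index,
\[
e(a,b)X_{21}^{l_2}=\mu_2^{l_2}\alpha^{a l_2}\,e(a,b).
\]
Thus each $e(a,b)$ is a simultaneous eigenvector of $D$, of $X_{12}X_{21}$ and of $X_{21}^{l_2}$, whose eigenvalues depend respectively on $b$ modulo $l_2$, on $a$ modulo $\ord(\alpha\beta)$ together with $b$ modulo $l_2$, and on $a l_2$ modulo $\ord(\alpha)$.

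Now take $0\neq w=\sum\lambda_{ab}e(a,b)\in\mathscr{W}$ with two nonzero coefficients $\lambda_{uv},\lambda_{xy}$, $(u,v)\neq(x,y)$. If $v\neq y$ then, as $0\le v,y\le l_2-1=\ord(\alpha^{-1}\beta)-1$, the vectors $e(u,v)$ and $e(x,y)$ are $D$-eigenvectors with distinct eigenvalues. If $v=y$ (so $u\neq x$) and $l_1=\ord(\alpha\beta)$, then $u\not\equiv x~(\mo \ord(\alpha\beta))$ and $X_{12}X_{21}$ separates them. If $v=y$ and $l_1=2\ord(\alpha\beta)$, then either $u\not\equiv x~(\mo \ord(\alpha\beta))$ and $X_{12}X_{21}$ again separates them, or $u\equiv x~(\mo \ord(\alpha\beta))$, which forces $|u-x|=\ord(\alpha\beta)$; in that case $X_{21}^{l_2}$ separates $e(u,v)$ and $e(x,y)$, since equality of their $X_{21}^{l_2}$-eigenvalues would force $\ord(\alpha)\mid (u-x)l_2=\pm\ord(\alpha\beta)\ord(\alpha\beta^{-1})$, whereas $l_1=2\ord(\alpha\beta)$ means $\ord(\beta)\nmid \ord(\alpha\beta)\ord(\alpha\beta^{-1})$ and hence, by the Remark in Section \ref{secpideg}, also $\ord(\alpha)\nmid \ord(\alpha\beta)\ord(\alpha\beta^{-1})$. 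In each case we have an operator $T$ and scalars $c_1\neq c_2$ with $e(u,v)T=c_1 e(u,v)$ and $e(x,y)T=c_2 e(x,y)$; then $wT-c_2 w\in\mathscr{W}$ is nonzero with strictly fewer terms than $w$, and induction on the number of terms produces a basis vector in $\mathscr{W}$. Together with the first paragraph this gives $\mathscr{W}=\mathscr{V}_2(\underline{\mu})$.

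I expect the only delicate point to be the case $l_1=2\ord(\alpha\beta)$, where $X_{12}X_{21}$ by itself fails to distinguish two basis vectors whose first indices differ by exactly $\ord(\alpha\beta)$, so the auxiliary operator $X_{21}^{l_2}$ must be brought in; that it does the job is precisely the symmetric non-divisibility of $\ord(\alpha)$ and $\ord(\beta)$ into $\ord(\alpha\beta)\ord(\alpha\beta^{-1})$ recorded in Section \ref{secpideg}. Everything else is bookkeeping with the periodicities $(\alpha\beta)^{l_1}=1$ and $(\alpha^{-1}\beta)^{l_2}=1$.
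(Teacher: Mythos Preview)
Your proof is correct and follows essentially the same approach as the paper: you compute the same eigenvalues for $X_{12}X_{21}$, $D$ and $X_{21}^{l_2}$ that the paper records, and then run the separation argument of Theorem~\ref{f1} with the appropriate adaptation (replacing $\ord(\beta)$ by $\ord(\alpha)$ in the critical Subcase~B, justified by the simultaneous-divisibility Remark in Section~\ref{secpideg}). The paper's own proof merely lists these eigenvalues and says the argument parallels Theorem~\ref{f1}, so you have in effect written out what the paper leaves to the reader.
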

\begin{proof} Each vector $e(a,b)$ is an eigenvector under the action of $X_{12}X_{21}, D$ and $X_{21}^{l_2}$ with eigenvalues $\mu_3(\alpha\beta)^a(\alpha^{-1}\beta)^b,\ -\mu_3\alpha^{-1}(\alpha^{-1}\beta)^b$ and $\mu_2^{l_2}\alpha^{al_2}$, respectively. 
With this fact, the proof parallels the proof of Theorem $\ref{f1}$.  
\end{proof}
\textbf{Simple modules of type $\mathscr{V}_3(\underline{\mu})$:} For $\underline{\mu}=(\mu_1,\mu_2)\in (\mathbb{{K}^*})^2$, consider the $\mathbb{K}$-vector space $\mathscr{V}_3(\underline{\mu})$ with basis $\{e(a,b):0 \leq a \leq \ord(\alpha\beta)-1, 0 \leq b \leq l_2-1\}$. Define an $M_2(\alpha, \beta)$-module structure on the $\mathbb{K}$-space $\mathscr{V}_3(\underline{\mu})$ as follows: 
\begin{align*}
e(a,b)X_{11}&=\begin{cases}
    \mu_2\alpha^{-1}(\alpha^{-1}\beta)^b[(\alpha\beta)^a-1]e(a-1,b)&\text{if}\ a\neq 0\\
    0& \text{if}\ a= 0
\end{cases}\\
e(a,b)X_{12}&=
    \mu_1^{-1}\mu_2\beta^a(\alpha^{-1}\beta)^be(a,b\dotplus(-1))\\
e(a,b)X_{21}&=\mu_1\alpha^ae(a,b\dotplus 1)\\
e(a,b)X_{22}&=\begin{cases}
    e(a+1,b)&\text{if}\ a\neq \ord(\alpha\beta)-1\\
    0&\text{if}\ a=\ord(\alpha\beta)-1
\end{cases}
\end{align*}
where $\dotplus$ is the addition modulo $l_2$. We can readily confirm that the action above defines an $M_2(\alpha,\beta)$-module, similarly as detailed in the previous case. 
\begin{theo}\label{f3}
The module $\mathscr{V}_3(\underline{\mu})$ is a simple $M_2(\alpha,\beta)$-module of dimension $\ord(\alpha\beta)\ord(\alpha\beta^{-1})$.
\end{theo}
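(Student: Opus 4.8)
The plan is to follow the same template as the proof of Theorem \ref{f1}: show that every nonzero submodule $\mathscr{W}$ of $\mathscr{V}_3(\underline{\mu})$ contains a basis vector $e(a,b)$, and then use the action of the generators to conclude $\mathscr{W}=\mathscr{V}_3(\underline{\mu})$. First I would record, exactly as in the previous two proofs, that every $e(a,b)$ is a simultaneous eigenvector for a suitable collection of operators: a direct computation gives $e(a,b)X_{12}X_{21}=\mu_2(\alpha\beta)^a(\alpha^{-1}\beta)^b\,e(a,b)$ and $e(a,b)X_{21}^{l_2}=\mu_1^{l_2}\alpha^{al_2}\,e(a,b)$. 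Since here the first index $a$ runs only over $0\le a\le\ord(\alpha\beta)-1$, the scalars $(\alpha\beta)^a$ are pairwise distinct, so $X_{12}X_{21}$ already separates basis vectors with different first coordinates, and the second coordinate is separated by a power of $X_{21}$ just as in Subcase~A of Theorem \ref{f1}; more precisely, if $e(u,v)$ and $e(x,y)$ are distinct basis vectors, then either $v\neq y$, in which case $\mu_1^{l_2}\alpha^{ul_2}$ and $\mu_1^{l_2}\alpha^{xl_2}$ together with $(\alpha\beta)^u(\alpha^{-1}\beta)^v$ vs. $(\alpha\beta)^x(\alpha^{-1}\beta)^y$ can be used to peel them apart, or $v=y$ and $u\neq x$, in which case $(\alpha\beta)^u\neq(\alpha\beta)^x$ and $X_{12}X_{21}$ does the job. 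Thus given any $w=\sum\lambda_{ab}e(a,b)\in\mathscr{W}$ with at least two nonzero coefficients, applying an operator of the form $w\mapsto w\cdot T - \eta\, w$ for an appropriate eigenvalue $\eta$ strictly reduces the number of nonzero terms while staying in $\mathscr{W}$; by induction $\mathscr{W}$ contains some $e(a,b)$.

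The genuinely new feature, compared to $\mathscr{V}_1$ and $\mathscr{V}_2$, is that $X_{11}$ and $X_{22}$ now act \emph{nilpotently} on the first index rather than cyclically: $e(0,b)X_{11}=0$ and $e(\ord(\alpha\beta)-1,b)X_{22}=0$. So the second, "generation" step needs a little more care than the curt "by the actions of the generators" of Theorem \ref{f1}. Starting from $e(a,b)\in\mathscr{W}$, I would first apply powers of $X_{22}$ to climb up to $e(\ord(\alpha\beta)-1,b)$ (each step multiplies by the nonzero scalar $1$, so this is fine), then apply powers of $X_{11}$: since $e(a',b)X_{11}=\mu_2\alpha^{-1}(\alpha^{-1}\beta)^b[(\alpha\beta)^{a'}-1]e(a'-1,b)$ and the bracket $[(\alpha\beta)^{a'}-1]$ is nonzero for $1\le a'\le\ord(\alpha\beta)-1$, repeatedly hitting with $X_{11}$ produces every $e(a',b)$ with $a'$ in range, up to nonzero scalars. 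Finally, $X_{21}$ acts on the second index by a cyclic shift modulo $l_2$ scaled by the nonzero $\mu_1\alpha^a$, so from the vectors $e(a',b)$ with $b$ fixed we reach all $e(a',b')$. Hence $\mathscr{W}$ contains the whole basis and $\mathscr{W}=\mathscr{V}_3(\underline{\mu})$.

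For the dimension statement there is nothing to prove beyond bookkeeping: the basis $\{e(a,b):0\le a\le\ord(\alpha\beta)-1,\ 0\le b\le l_2-1\}$ has cardinality $\ord(\alpha\beta)\cdot l_2=\ord(\alpha\beta)\ord(\alpha\beta^{-1})$, which is precisely the claimed dimension. The only point requiring vigilance is the well-definedness of the module structure, which the statement instructs us to take for granted ("we can readily confirm \dots similarly as detailed in the previous case"); the one relation worth spelling out if pressed is $X_{22}X_{11}-X_{11}X_{22}=(\beta-\alpha^{-1})X_{12}X_{21}$, verified by the same telescoping computation as for $\mathscr{V}_1(\underline{\mu})$, using that the coefficient $[(\alpha\beta)^{a}-1]$ picked up when $X_{11}$ lowers the index cancels correctly against the $X_{12}X_{21}$-eigenvalue. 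I expect the main (very mild) obstacle to be precisely this generation step, i.e. making sure the nilpotent boundary behaviour of $X_{11}$ and $X_{22}$ does not obstruct reaching all basis vectors; the eigenvector/length-reduction argument is essentially identical to Theorem \ref{f1}.
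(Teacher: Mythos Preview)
Your overall strategy matches the paper's, but your separation step has a genuine gap. You propose to distinguish basis vectors using only the two operators $X_{12}X_{21}$ and $X_{21}^{l_2}$, with eigenvalues $\mu_2(\alpha\beta)^a(\alpha^{-1}\beta)^b$ and $\mu_1^{l_2}\alpha^{al_2}$. The second of these depends only on $a$, not on $b$, so it cannot help with the second coordinate; and the first need not be injective on the whole index set. Concretely, take $\alpha=\zeta_9$ a primitive $9$th root of unity and $\beta=\zeta_9^{2}$. Then $\alpha\beta=\zeta_9^{3}$ has order $3$, $\alpha^{-1}\beta=\zeta_9$, and $l_2=\ord(\alpha\beta^{-1})=9$. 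Now $\alpha^{l_2}=\zeta_9^{9}=1$, so $X_{21}^{l_2}$ acts by the \emph{single} scalar $\mu_1^{9}$ on every $e(a,b)$, while the $X_{12}X_{21}$-eigenvalue is $\mu_2\zeta_9^{3a+b}$, which coincides for $e(0,0)$ and $e(1,6)$. Thus neither of your two operators separates $e(0,0)$ from $e(1,6)$, and the length-reduction argument stalls.

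The paper avoids this by using $D$ in place of $X_{21}^{l_2}$: one checks $e(a,b)D=-\mu_2\alpha^{-1}(\alpha^{-1}\beta)^b\,e(a,b)$, an eigenvalue depending only on $b$ and injective on $0\le b\le l_2-1$ since $l_2=\ord(\alpha^{-1}\beta)$. Then $D$ handles the case $v\neq y$ outright, and $X_{12}X_{21}$ handles $v=y,\ u\neq x$ exactly as you say (here the range $0\le a\le\ord(\alpha\beta)-1$ is what makes the $(\alpha\beta)^a$ distinct, so no Subcase~B analogue is needed). Your generation paragraph and the dimension count are fine; once you swap $X_{21}^{l_2}$ for $D$, the proof goes through as written.
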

\begin{proof}
Each vector $e(a,b)$ in $\mathscr{V}_3(\underline{\mu})$ is an eigenvector under the action of $X_{12}X_{21}$ and $D$ with eigenvalues $\mu_2(\alpha\beta)^a(\alpha^{-1}\beta)^b$ and $\ -\mu_2\alpha^{-1}(\alpha^{-1}\beta)^b$, respectively. Given this fact, the proof is similar to Theorem $\ref{f1}$.
\end{proof}
\begin{rema}\label{iso1} We have the following observations :
    \begin{enumerate}
\item $X_{11}^{l_1}$ does not annihilate the simple $M_2(\alpha,\beta)$-module $\mathscr{V}_1(\underline{\mu})$.
\item $X_{11}^{l_1}$ annihilates the simple $M_2(\alpha,\beta)$-module $\mathscr{V}_2(\underline{\mu})$, but $X_{22}^{l_1}$ does not.
\item Both $X_{11}^{l_1}$ and $X_{22}^{l_1}$ annihilate the simple $M_2(\alpha,\beta)$-module $\mathscr{V}_3(\underline{\mu})$.
\end{enumerate}
Thus the above three types of simple $M_2(\alpha,\beta)$-modules are non-isomorphic.
\end{rema}
\section{Classification of simple $M_2(\alpha,\beta)$-modules}\label{seccla} In this section, we fully classify all simple $M_2(\alpha,\beta)$-modules, under the assumption that $\alpha$ and $\beta$ are primitive $m$-th and $n$-th roots of unity, respectively, and that $\mathbb{K}$ is an algebraically closed field. Let $\mathcal{N}$ represent a simple module over $M_2(\alpha,\beta)$. Then by Proposition \ref{pidimresult}, the $\mathbb{K}$-dimension of $\mathcal{N}$ is finite and does not exceed $\pideg\left(M_2(\alpha,\beta)\right)$.
\par When $\alpha\beta=1$, the algebra $M_2(\alpha,\alpha^{-1})$ becomes a quantum affine space $\mathcal{O}_{\Lambda}(\mathbb{K}^4)$ with PI degree $m$, see Subsection \ref{piisoqa}. The simple modules over quantum affine spaces was studied in \cite{smsb}. In this case the $\mathbb{K}$-dimensions of simple $M_2(\alpha,\alpha^{-1})$-module $\mathcal{N}$ are $1$ (when the action of atleast three generators are torsion) or $m$ (when the action of atmost two generators are torsionfree).
\par In the following, assume that $\alpha\beta\neq 1$. The classification of simple $M_2(\alpha,\beta)$-modules relies on the action of specific central or normal elements within the algebra $M_2(\alpha,\beta)$. Notably, the elements $X_{12}, X_{21}$ and $D$ are normal elements. In light of Lemma \ref{normalaction}, the action of any normal element on a simple module $\mathcal{N}$ is trivial or invertible. With this in mind, we now proceed to the following subsections:
\subsection{Simple $X_{12}$-torsion $M_2(\alpha,\beta)$-modules} \label{subclass}
Suppose $\mathcal{N}$ is a $X_{12}$-torsion simple $M_2(\alpha,\beta)$-module. Then $\mathcal{N}X_{12}=0$ and so $\mathcal{N}$ becomes a simple module over the factor algebra $M_2(\alpha,\beta)/\langle X_{12}\rangle$. This factor algebra is isomorphic to a quantum affine space $\mathcal{O}_{\Lambda}(\mathbb{K}^3)$ under the correspondence $X_{11}\mapsto x_1, X_{21}\mapsto x_2, X_{22}\mapsto x_3$, where \[\Lambda=\begin{pmatrix}
   1&\beta^{-1}&1\\
   \beta&1&\alpha^{-1}\\
   1&\alpha&1
\end{pmatrix}.\]
In the roots of unity context, similar to Subsection \ref{subsectionpi}, here the skew-symmetric integral matrix associated with $\Lambda$ has rank $2$ and hence it has a kernel of dimension $1$ and one pair of invariant factors $h_1$, where $h_1=\gcdi(s_1k_1,s_2k_2)$. Consequently by Proposition \ref{mainpi}, the PI degree of $\mathcal{O}_{\Lambda}(\mathbb{K}^3)$ is $l=\lcmu(m,n)$ as $\gcdi(h_1,l)=1$. It is important to note that the simple modules over quantum affine spaces have been classified in \cite{smsb}. This classification provides the structure of $X_{12}$-torsion simple $M_2(\alpha,\beta)$-modules $\mathcal{N}$. In this case, the possible $\mathbb{K}$-dimensions of $\mathcal{N}$ are as follows:
\begin{enumerate}
    \item if the actions of $x_1,x_2,x_3$ on $\mathcal{N}$ are invertible, then $\dime_{\mathbb{K}}(\mathcal{N})=l$.
    \item if the actions of $x_1,x_2$ are invertible and $x_3$ is trivial on $\mathcal{N}$, then $\dime_{\mathbb{K}}(\mathcal{N})=\ord(\beta)$.
    \item if the actions of $x_2,x_3$ are invertible and $x_1$ is trivial on $\mathcal{N}$, then $\dime_{\mathbb{K}}(\mathcal{N})=\ord(\alpha)$.
    \item otherwise $\dime_{\mathbb{K}}(\mathcal{N})=1$.
\end{enumerate}
\subsection{Simple $X_{21}$-torsion $M_2(\alpha,\beta)$-modules} Suppose $\mathcal{N}$ is a $X_{21}$-torsion simple $M_2(\alpha,\beta)$-module. Define a $\mathbb{K}$-linear map $\theta:M_2(\alpha,\beta)\rightarrow M_2(\beta,\alpha)$ by 
\[\theta(X_{11})=X_{11}, \theta(X_{12})=X_{21}, \theta(X_{21})=X_{12}, \theta(X_{22})=X_{22}.\]
We can easily verify that $\theta$ is an isomorphism. With this, $\mathcal{N}$ becomes a $X_{12}$-torsion simple module over $M_2(\beta,\alpha)$. Thus by Subsection \ref{subclass}, we can classify the structure of this type simple modules.
\subsection{Simple $D$-torsion $M_2(\alpha,\beta)$-modules} \label{subclass3}
Suppose $\mathcal{N}$ is a $D$-torsion simple $M_2(\alpha,\beta)$-module. Consequently, $\mathcal{N}D=0$ and so $\mathcal{N}$ becomes a simple module over the factor algebra $M_2(\alpha,\beta)/\langle D\rangle$. This factor algebra is isomorphic to $\mathcal{O}_{\Lambda}(\mathbb{K}^4)/\langle x_{1}x_4-\alpha^{-1}x_2x_3 \rangle$ under the correspondence $X_{11}\mapsto x_1,X_{12}\mapsto x_2, X_{21}\mapsto x_3, X_{22}\mapsto x_4$, where
\[\Lambda=\begin{pmatrix}
    1&\alpha^{-1}&\beta^{-1}&\alpha^{-1}\beta^{-1}\\
    \alpha&1&\alpha\beta^{-1}&\beta^{-1}\\
    \beta&\alpha^{-1}\beta&1&\alpha^{-1}\\
    \alpha\beta&\beta&\alpha&1
\end{pmatrix}.\]
In the roots of unity setting, similar to Subsection \ref{subsectionpi}, here the skew-symmetric integral matrix associated with $\Lambda$ has rank $2$ and one pair of invariant factors $h_1$, where $h_1=\gcdi(s_1k_1,s_2k_2)$. Consequently by Proposition \ref{mainpi}, the PI degree of $\mathcal{O}_{\Lambda}(\mathbb{K}^4)$ is $l=\lcmu(m,n)$ as $\gcdi(h_1,l)=1$. Using the classification of simple modules over quantum affine spaces \cite{smsb}, we can classify the structure of the $D$-torsion simple $M_2(\alpha,\beta)$-modules $\mathcal{N}$. 
In this case, the possible $\mathbb{K}$-dimensions of simple module $\mathcal{N}$ are as follows:
\begin{enumerate}
    \item if the actions of $x_2$ and $x_3$ on $\mathcal{N}$ are invertible, then $\dime_{\mathbb{K}}(\mathcal{N})=l$.
    \item if the action of $x_2$ (or $x_3$) on $\mathcal{N}$ is trivial, then by the relation $x_1x_4=\alpha^{-1}x_2x_3$, the action of $x_1$ or $x_4$ on $\mathcal{N}$ must also be trivial. Consequently we find 
    that the $\dime_{K}(\mathcal{N})$ is $1$ or $\ord(\alpha)$ or $\ord(\beta)$.
\end{enumerate}
In each of the previous three subsections, a simple $M_2(\alpha,\beta)$-module reduces to the structure of a simple module over a much simpler algebra, namely quantum affine space or its quotients.
\subsection{Simple $X_{12},X_{21}$ and $D$-torsionfree $M_2(\alpha,\beta)$-modules} 
Suppose $\mathcal{N}$ is a $X_{12}$, $X_{21}$ and $D$-torsionfree simple $M_2(\alpha,\beta)$-module. It is important to note that the elements \[X_{11}^{l_1}, X_{22}^{l_1}, X_{21}^{l_2}, X_{12}X_{21}\ \text{and}\ D\] are commuting elements in $M_2(\alpha,\beta)$. Since $\mathcal{N}$ is finite-dimensional, these commuting operators have a common eigenvector $v$ in $\mathcal{N}$. So we can take 
\begin{equation}\label{commeigen}
    vX_{11}^{l_1}=\eta_1v, vX_{22}^{l_1}=\eta_2v, vx_{21}^{l_2}=\eta_3v, vX_{12}X_{21}=\eta_4v, vD=\eta_5v
\end{equation}
Note that 
\begin{equation}\label{simplification}
    \eta_5v=vD=v\left(X_{22}X_{11}-\beta X_{12}X_{21}\right)\implies vX_{22}X_{11}=(\eta_5+\eta_4\beta)v.
\end{equation} As $\mathcal{N}$ is $X_{12},X_{21}$ and $D$-torsionfree, the scalars $\eta_3, \eta_4, \eta_5$ are non-zero. In particular, $X_{12}$ and $X_{21}$ act as invertible operators on $\mathcal{N}$. The following cases arise depending on the scalars $\eta_1$ and $\eta_2$.\\
\textbf{Case I:} First assume that $\eta_1 \neq 0$. In this case, the action of $X_{11}$ on $\mathcal{N}$ is invertible as $X_{11}^{l}$ is central and $l_1$ divides $l$. Consequently each of vectors $vX_{11}^aX_{21}^b$ where $0 \leq a \leq l_1-1, 0 \leq b \leq l_2-1$ in $\mathcal{N}$ nonzero. Let us choose
\[\mu_1:=\eta_1^{\frac{1}{l_1}},\ \mu_2:=\eta_3^{\frac{1}{l_2}}, \ \mu_3:=\eta_4,\ \mu_4:=\eta_5\] so that 
$\underline{\mu}=(\mu_1,\mu_2,\mu_3,\mu_4)\in {(\mathbb{K}^*)}^4$.  Now define a $\mathbb{K}$-linear map \[\Phi_1:\mathscr{V}_1(\underline{\mu})\rightarrow \mathcal{N}\] by specifying the image of the basis vectors of $\mathscr{V}_1(\underline{\mu})$ as follows:
\[\Phi_1\left(e(a,b)\right):=\mu_1^{-a}\mu_2^{-b}vX_{11}^aX_{21}^b,\ \ 0\leq a\leq l_1-1, 0 \leq b \leq l_2-1.\] We can easily verify that $\Phi_1$ is a nonzero $M_2(\alpha,\beta)$-module homomorphism. In this verification the following computations, using Lemma \ref{crelation} and the equality (\ref{simplification}), will be very useful:
\begin{align*}
    (vX_{11}^aX_{21}^b)X_{22}&=\begin{cases}
    \alpha^{-b}\left(\eta_5+\beta(\alpha\beta)^{-a}\eta_4\right)(vX_{11}^{a-1}X_{21}^b)&\text{if}\ a\neq 0\\
    \eta_1^{-1}\alpha^{-b}\left(\eta_5+\beta\eta_4\right)(vX_{11}^{l_1-1}X_{21}^b)&\text{if}\ a= 0.
    \end{cases}    
\end{align*}
Thus by Schur's lemma, $\Phi_1$ becomes an $M_2(\alpha,\beta)$-module isomorphism.\\
\textbf{Case II:} Assume that $\eta_1 = 0$ and $\eta_2\neq 0$. In 
Then the action of $X_{11}$ on $\mathcal{N}$ is nilpotent and the action of $X_{21}$ on $\mathcal{N}$ is invertible. Suppose there exists $0\leq r\leq l_1-1$ such that $vX_{11}^{r}\neq 0$ and $vX_{11}^{r+1}=0$. Define $u:=vX_{11}^{r}\neq 0$. Using (\ref{commeigen}), simplify the following
\[uX_{11}=0, uX_{22}^{l_1}=\eta_2u, uX_{21}^{l_2}=\beta^{-rl_2}\eta_3u, uX_{12}X_{21}=(\alpha\beta)^{-r}\eta_4u, uD=-\alpha^{-1}uX_{12}X_{21}.\]
As the actions of $X_{22}$ and $X_{21}$ are invertible, the set $\{uX_{21}^bX_{22}^a:0 \leq a \leq l_1-1, 0 \leq b \leq l_2-1\}$ consists of nonzero vectors of $\mathcal{N}$. Let us choose
\[\mu_1:=\eta_2^{\frac{1}{l_1}},\ \mu_2:=\beta^{-r}\eta_3^{\frac{1}{l_2}}, \ \mu_3:=(\alpha\beta)^{-r}\eta_4 \] so that 
$\underline{\mu}=(\mu_1,\mu_2,\mu_3)\in {(\mathbb{K}^*)}^3$.  Now define a $\mathbb{K}$-linear map \[\Phi_2:\mathscr{V}_2(\underline{\mu})\rightarrow \mathcal{N}\] by specifying the image of the basis vectors of $\mathscr{V}_2(\underline{\mu})$ as follows:
\[\Phi_2\left(e(a,b)\right):=\mu_1^{-a}\mu_2^{-b}uX_{21}^bX_{22}^a,\ \ 0\leq a\leq l_1-1, 0 \leq b \leq l_2-1.\] One can easily verify that $\Phi_2$ is a non zero $M_2(\alpha,\beta)$-module homomorphism. In this verification the following computation, using Lemma \ref{crelation}, will be very useful:
\begin{align*}
(uX_{21}^bX_{22}^a)X_{11}&=\begin{cases}
(\alpha^{-1}\beta)^b\alpha^{-1}[(\alpha\beta)^a-1](uX_{21}^bX_{22}^{a-1})
&\text{if}\ a \neq 0\\
    0,&\text{if}\ a=0
    \end{cases}
\end{align*}
Thus by Schur's lemma, $\Phi_2$ becomes an $M_2(\alpha,\beta)$-module isomorphism.\\
\textbf{Case III:} Finally assume that $\eta_1 = 0$ and $\eta_2= 0$. In this case, the actions of $X_{11}$ and $X_{22}$ on $\mathcal{N}$ are nilpotent. Then the $\mathbb{K}$-space $\ker(X_{11})=\{v\in\mathcal{N}:vX_{11}=0\}$ is nonzero and is invariant under the action of the commuting operators $X_{22}^{l_1}, X_{21}^{l_2}$ and $X_{12}X_{21}$. So there is a common eigenvectors $u\in \ker(X_{11})$ of these commuting operators. Take 
\[uX_{11}=0, uX_{22}^{l_1}=\lambda_1u,\ uX_{21}^{l_2}=\lambda_2u,\  uX_{12}X_{21}=\lambda_3u.\]
Here the scalars $\lambda_2,\lambda_3\in\mathbb{K}^*$ as $\mathcal{N}$ is $X_{12},X_{21}$-torsionfree. The nilpotent action of $X_{22}$ implies $\lambda_1=0$. Suppose $k$ is the smallest integer with $1 \leq k \leq l_1$ such that $uX_{22}^{k-1} \neq 0$ and $uX_{22}^k=0$. We claim that $k=\ord(\alpha\beta)$. In fact simplify the equality $(uX_{22}^k)X_{11}=0$, to obtain 
\begin{align*}
    0=uX_{22}^kX_{11}&=u\left(X_{11}X_{22}^k+\alpha^{-1}[(\alpha\beta)^k-1]X_{12}X_{21}X_{22}^{k-1}\right)\\
    &=\alpha^{-1}[(\alpha\beta)^k-1]\lambda_3(uX_{22}^{k-1}).
\end{align*}
This implies that $(\alpha\beta)^k=1$ for some $1\leq k\leq l_1$ and then by our choice of $l_1$, we have $k=\ord(\alpha\beta)$ or $2\ord(\alpha\beta)$. If $k=2\ord(\alpha\beta)$, then the $\mathbb{K}$-span $S$ of \[\{uX_{21}^bX_{22}^a:\ord{(\alpha\beta)}\leq a\leq 2\ord(\alpha\beta)-1,0\leq b\leq l_2-1\}\] generates a nonzero submodule of $\mathcal{N}$. Since $\mathcal{N}$ is simple, therefore $S=\mathcal{N}$. In particular $u\in S$ and then $uX_{22}^{\ord(\alpha\beta)}=0$, which is a contradiction.
Thus $k=\ord(\alpha\beta)$ and it follows that the vectors $uX_{21}^bX_{22}^a$ where $0 \leq a \leq \ord(\alpha\beta)-1, 0 \leq b \leq l_2-1$ are nonzero. Take $\underline{\mu}:=(\lambda_2^{\frac{1}{l_2}},\lambda_3)\in(\mathbb{K}^*)^2$.
Now define a $\mathbb{K}$-linear map \[\Phi_3:\mathscr{V}_3(\underline{\mu})\rightarrow \mathcal{N}\] by specifying the image of the basis vectors $e(a,b)$ of $\mathscr{V}_3(\underline{\mu})$ as follows:
\[\Phi_3\left(e(a,b)\right):=\mu_1^{-b}uX_{21}^bX_{22}^a,\ \ 0\leq a\leq \ord(\alpha\beta)-1, 0 \leq b \leq l_2-1.\]
One can easily verify that $\Phi_3$ is a non-zero $M_2(\alpha,\beta)$-module homomorphism. Thus by Schur's lemma, $\Phi_3$ becomes an $M_2(\alpha,\beta)$-module isomorphism.
\par The preceding discussion leads to one of the key results of this section, offering a framework for classifying simple 
$M_2(\alpha,\beta)$-modules in terms of scalar parameters.
\begin{theo}\label{iso} 
Suppose $\mathcal{N}$ is a simple $X_{12},X_{21}$ and $D$-torsionfree $M_2(\alpha,\beta)$-module. Then $\mathcal{N}$ is isomorphic to one of the following simple $M_2(\alpha,\beta)$-modules:
\begin{enumerate}
    \item $\mathscr{V}_1(\underline{\mu})$ for some $\underline{\mu}=(\mu_1,\mu_2,\mu_3,\mu_4)\in \mathbb{({K}^*)}^4$ if $\mathcal{N}$ is $X_{11}$-torsionfree.
    \item $\mathscr{V}_2(\underline{\mu})$ for some $\underline{\mu}=(\mu_1,\mu_2,\mu_3)\in \mathbb{({K}^*)}^3$ if $\mathcal{N}$ is $X_{11}$-torsion and $X_{22}$-torsionfree.
    \item $\mathscr{V}_3(\underline{\mu})$ for some $\underline{\mu}=(\mu_1,\mu_2)\in (\mathbb{{K}^*})^2$ if $\mathcal{N}$ is $X_{11},X_{22}$-torsion.
\end{enumerate}
\end{theo}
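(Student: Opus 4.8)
The plan is to reduce the general statement to the three case analyses already carried out in Subsections~\ref{subclass} through~\ref{subclass3} by extracting a single common eigenvector for a suitable commuting family of operators, and then to build an explicit module homomorphism from the appropriate $\mathscr{V}_i(\underline{\mu})$ onto $\mathcal{N}$ and invoke Schur's lemma. Concretely, since $\mathcal{N}$ is a finite-dimensional $\mathbb{K}$-vector space over an algebraically closed field (Proposition~\ref{pidimresult}) and the elements $X_{11}^{l_1},X_{22}^{l_1},X_{21}^{l_2},X_{12}X_{21},D$ pairwise commute in $M_2(\alpha,\beta)$, there is a vector $v\in\mathcal{N}$ that is a simultaneous eigenvector with eigenvalues $\eta_1,\dots,\eta_5$ as in~(\ref{commeigen}). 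The torsionfreeness hypothesis on $X_{12},X_{21}$ and $D$ forces $\eta_3,\eta_4,\eta_5\in\mathbb{K}^*$ and makes $X_{12},X_{21}$ act invertibly; the relation $D=X_{22}X_{11}-\beta X_{12}X_{21}$ gives~(\ref{simplification}). The trichotomy is then governed entirely by whether $\eta_1$ and $\eta_2$ vanish.

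First I would treat Case~I, $\eta_1\neq 0$. Because $X_{11}^{l}$ is central and $l_1\mid l$, nonvanishing of $\eta_1$ forces $X_{11}$ to act invertibly, so $\mathcal{N}$ is $X_{11}$-torsionfree; the $l_1 l_2$ vectors $vX_{11}^aX_{21}^b$ ($0\le a\le l_1-1$, $0\le b\le l_2-1$) are all nonzero. Set $\mu_1:=\eta_1^{1/l_1}$, $\mu_2:=\eta_3^{1/l_2}$, $\mu_3:=\eta_4$, $\mu_4:=\eta_5$, and define $\Phi_1\colon\mathscr{V}_1(\underline{\mu})\to\mathcal{N}$ by $\Phi_1(e(a,b)):=\mu_1^{-a}\mu_2^{-b}\,vX_{11}^aX_{21}^b$. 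One checks $\Phi_1$ intertwines the four generator actions: the $X_{11}$, $X_{12}$, $X_{21}$ actions follow from how these commute past $X_{11}$ and $X_{21}$ (using normality of $X_{12},X_{21}$ and the quantum commutation constants), while the $X_{22}$ action is the subtle one and requires Lemma~\ref{crelation} to move $X_{22}$ past $X_{11}^a$, together with~(\ref{simplification}) to rewrite $vX_{22}X_{11}$; this is exactly the displayed identity for $(vX_{11}^aX_{21}^b)X_{22}$ in the text. Since $\Phi_1\neq 0$, both $\mathscr{V}_1(\underline{\mu})$ and $\mathcal{N}$ are simple, and a nonzero map between simples is an isomorphism.

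Next, Case~II assumes $\eta_1=0$, $\eta_2\neq 0$; then $X_{11}$ acts nilpotently (so $\mathcal{N}$ is $X_{11}$-torsion) and $X_{22}$ acts invertibly (so $X_{22}$-torsionfree). Pick the largest $r$ with $vX_{11}^r\neq 0$, put $u:=vX_{11}^r$, and compute the eigen-data of $u$ under $X_{11},X_{22}^{l_1},X_{21}^{l_2},X_{12}X_{21},D$ using the commutation relations; the key point is $uX_{11}=0$. Choosing $\mu_1:=\eta_2^{1/l_1}$, $\mu_2:=\beta^{-r}\eta_3^{1/l_2}$, $\mu_3:=(\alpha\beta)^{-r}\eta_4$ and $\Phi_2(e(a,b)):=\mu_1^{-a}\mu_2^{-b}\,uX_{21}^bX_{22}^a$ gives a nonzero homomorphism $\mathscr{V}_2(\underline{\mu})\to\mathcal{N}$, again by a generator-by-generator check in which the $X_{11}$-action uses Lemma~\ref{crelation} to pass $X_{11}$ through $X_{22}^a$; Schur finishes it. Finally, Case~III assumes $\eta_1=\eta_2=0$, so both $X_{11}$ and $X_{22}$ act nilpotently. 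Here one works inside the nonzero subspace $\ker(X_{11})$, which is invariant under the commuting operators $X_{22}^{l_1},X_{21}^{l_2},X_{12}X_{21}$; choose a common eigenvector $u$ there, necessarily with $uX_{22}^{l_1}=0$ eigenvalue (nilpotence), and let $k$ be minimal with $uX_{22}^k=0$. Applying $X_{11}$ to $uX_{22}^k$ and using Lemma~\ref{crelation} forces $(\alpha\beta)^k=1$, hence $k\in\{\ord(\alpha\beta),2\ord(\alpha\beta)\}$; the case $k=2\ord(\alpha\beta)$ is ruled out because the $\mathbb{K}$-span of $\{uX_{21}^bX_{22}^a:\ord(\alpha\beta)\le a\le 2\ord(\alpha\beta)-1\}$ would be a nonzero proper submodule — proper since it does not contain $u$ — contradicting simplicity. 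So $k=\ord(\alpha\beta)$, the vectors $uX_{21}^bX_{22}^a$ ($0\le a\le\ord(\alpha\beta)-1$, $0\le b\le l_2-1$) are nonzero, and with $\underline{\mu}:=(\lambda_2^{1/l_2},\lambda_3)$ the map $\Phi_3(e(a,b)):=\mu_1^{-b}\,uX_{21}^bX_{22}^a$ is a nonzero homomorphism $\mathscr{V}_3(\underline{\mu})\to\mathcal{N}$, hence an isomorphism by Schur.

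\textbf{Main obstacle.} The routine part is the book-keeping for $X_{11},X_{12},X_{21}$; the genuine work is verifying that each $\Phi_i$ respects the $X_{22}$-action (resp.\ the $X_{11}$-action in Case~II), since this is where the non-diagonal defining relation $X_{22}X_{11}-X_{11}X_{22}=(\beta-\alpha^{-1})X_{12}X_{21}$ enters through the power identities of Lemma~\ref{crelation}, and where one must match the eigenvalue combination $\eta_5+\beta(\alpha\beta)^{-a}\eta_4$ against the coefficient $\mu_1^{-1}\alpha^{-b}[\mu_4+\beta(\alpha\beta)^{-a}\mu_3]$ appearing in the definition of $\mathscr{V}_1(\underline{\mu})$ (and the wrap-around term at $a=0$). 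The other delicate point is the elimination of the spurious value $k=2\ord(\alpha\beta)$ in Case~III, which is precisely the place where the definition of $l_1$ — and the possibility $l_1=2\ord(\alpha\beta)$ — interacts with simplicity of $\mathcal{N}$.
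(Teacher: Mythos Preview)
Your proposal is correct and follows essentially the same approach as the paper's own argument: extract a common eigenvector for the commuting family $X_{11}^{l_1},X_{22}^{l_1},X_{21}^{l_2},X_{12}X_{21},D$, split into three cases according to the vanishing of $\eta_1,\eta_2$, and in each case build the explicit $\Phi_i$ and invoke Schur's lemma, with Lemma~\ref{crelation} handling the nontrivial generator. The only cosmetic difference is in Case~III: where you assert $S$ is proper because $u\notin S$, the paper instead argues that simplicity forces $S=\mathcal{N}$, whence $u\in S$ and so $uX_{22}^{\ord(\alpha\beta)}=0$, contradicting minimality of $k$ --- these are contrapositives of one another, but note that your claim ``$u\notin S$'' still implicitly requires this same observation (that membership in $S$ would kill $uX_{22}^{\ord(\alpha\beta)}$) to be justified.
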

\begin{coro}\label{corogcd}
The simple modules over $M_2(\alpha,\beta)$ have been studied in \cite{kar} with the deformation parameters $\mathrm{p}$ and $\mathrm{q}$ under the assumption that $\ord(\mathrm{p})=\ord(\mathrm{q})$ is odd, where $\alpha=(\mathrm{pq})^{-1}$ and $\beta=\mathrm{p}\mathrm{q}^{-1}$ in our context. This assumption gives $\ord(\alpha)$ and $\ord(\beta)$ are odd with $\ord(\alpha\beta)=\ord(\alpha\beta^{-1})=t$ (say). In this setting by [Case I, Subsection \ref{subsectionpi}] we have $\pideg M_2(\alpha,\beta)=t^2$ and by Theorem \ref{iso}, the simple $X_{12},X_{21}$, and $D$-torsionfree modules are of dimension $t^2$.  Here the simple modules $\mathscr{V}_1(\underline{\mu}),\mathscr{V}_2(\underline{\mu})$ and $\mathscr{V}_3(\underline{\mu})$ correspond to the \enquote{toroidal}, \enquote{semitoroidal} and \enquote{cylindrical} representations, respectively, as noted in \cite{kar}.
\end{coro}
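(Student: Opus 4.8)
The plan is to reduce the statement to a short computation of orders of roots of unity together with direct appeals to Sections~\ref{secpideg} and \ref{seccla}. First I would rewrite Karimipour's parameters. With $\alpha=(\mathrm{pq})^{-1}$ and $\beta=\mathrm{p}\mathrm{q}^{-1}$ one has $\alpha\beta=\mathrm{q}^{-2}$ and $\alpha\beta^{-1}=\mathrm{p}^{-2}$. The hypothesis $\ord(\mathrm{p})=\ord(\mathrm{q})=r$ with $r$ odd forces $\mathrm{char}(\mathbb{K})\nmid r$, so the group $G=\{x\in\mathbb{K}^*:x^r=1\}$ is cyclic of order $r$ and contains $\mathrm{p},\mathrm{q}$, hence also $\alpha$ and $\beta$. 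Therefore $m=\ord(\alpha)$ and $n=\ord(\beta)$ divide $r$ and are odd. Since $r$ is odd, squaring is a bijection on $G$, so $\ord(\alpha\beta)=\ord(\mathrm{q}^2)=r$ and $\ord(\alpha\beta^{-1})=\ord(\mathrm{p}^2)=r$; write $t:=r$. If $r=1$ then $\alpha=\beta=1$ and the algebra is commutative, so assume $r>1$; then $\alpha\neq\beta^{\pm1}$ and $\alpha\beta^{\pm1}\neq1$, because each of those equalities would force $r=\ord(\mathrm{p})$ or $r=\ord(\mathrm{q})$ to divide $2$.

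Next I would read off the PI degree. As $m$ and $n$ are both odd and $\alpha\neq\beta^{\pm1}$, we are in Case~I of Subsection~\ref{subsectionpi}, giving $\pideg M_2(\alpha,\beta)=\ord(\alpha\beta)\ord(\alpha\beta^{-1})=t^2$ (this is also the first branch of Theorem~\ref{pithm}, since $n=t$ divides $t^2$). Because $n=t$ divides $\ord(\alpha\beta)\ord(\alpha\beta^{-1})=t^2$, the integer $l_1$ of Section~\ref{seccon} equals $\ord(\alpha\beta)=t$, while $l_2=\ord(\alpha\beta^{-1})=t$. Substituting these values into Theorems~\ref{f1}, \ref{f2} and \ref{f3} shows that $\mathscr{V}_1(\underline{\mu})$, $\mathscr{V}_2(\underline{\mu})$ and $\mathscr{V}_3(\underline{\mu})$ each have dimension $l_1 l_2=t^2$, hence are all maximal dimensional. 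Theorem~\ref{iso} then says that every $X_{12},X_{21},D$-torsionfree simple $M_2(\alpha,\beta)$-module is isomorphic to one of these three, so has dimension $t^2$.

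The last point is the identification with the representations of \cite{kar}, which I would carry out by comparing the module actions. The \enquote{toroidal} representation is the one on which all four generators act invertibly, that is, the $X_{11}$-torsionfree case, which is $\mathscr{V}_1(\underline{\mu})$; the \enquote{semitoroidal} representation is the one on which exactly one of $X_{11},X_{22}$ acts nilpotently, namely $\mathscr{V}_2(\underline{\mu})$; and the \enquote{cylindrical} representation is the one on which both $X_{11}$ and $X_{22}$ act nilpotently, namely $\mathscr{V}_3(\underline{\mu})$. Concretely, one substitutes $\alpha=(\mathrm{pq})^{-1}$ and $\beta=\mathrm{p}\mathrm{q}^{-1}$ into the defining actions of Section~\ref{seccon} and rescales basis vectors to match \cite{kar}. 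I expect the only real friction to be this bookkeeping step — reconciling the conventions of \cite{kar} (left versus right modules, the normalization of the quantum determinant, the precise change of variables) with ours — but the dimension count already establishes the substantive content: Karimipour's three families exhaust the torsionfree simples exactly when $\ord(\mathrm{p})=\ord(\mathrm{q})$ is odd, whereas Theorems~\ref{pithm} and \ref{iso} account for the remaining cases, when $\mathrm{p}$ and $\mathrm{q}$ are distinct roots of unity or have even common order.
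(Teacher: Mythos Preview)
Your argument is correct and matches the paper's reasoning: the corollary has no separate proof in the paper, being an immediate consequence of Case~I of Subsection~\ref{subsectionpi} together with Theorems~\ref{f1}--\ref{f3} and~\ref{iso}, exactly as you unwind it. One small slip: you write ``$n=t$'', but $n=\ord(\beta)=\ord(\mathrm{p}\mathrm{q}^{-1})$ need not equal $t=r$ (take $\mathrm{p}=\mathrm{q}$, so $\beta=1$ and $n=1$ while $t=r$); what is true, and all you need, is $n\mid r=t$, hence $n\mid t^2$, so the first branch of Theorem~\ref{pithm} and the definition $l_1=\ord(\alpha\beta)$ still apply.
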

Finally, we can derive the following result based on the above classification.
\begin{theo}\label{ifof}
    Suppose $\ord(\alpha)=m, \ord(\beta)=n$ and $l=\lcmu(m,n)$.  
    \begin{enumerate}
        \item If $l$ is a proper divisor of $\ord(\alpha\beta)\ord(\alpha\beta^{-1})$, then $\mathcal{N}$ is maximal dimensional simple $M_2(\alpha,\beta)$-module if and only if $\mathcal{N}$ is $X_{12},X_{21}$ and $D$-torsionfree simple module.
        \item If $l$ does not divide $\ord(\alpha\beta)\ord(\alpha\beta^{-1})$, then $\mathcal{N}$ is maximal dimensional simple $M_2(\alpha,\beta)$-module if and only if $\mathcal{N}$ is $X_{ij}$ and $D$-torsionfree simple module for $i,j=1,2$.
    \end{enumerate}
\end{theo}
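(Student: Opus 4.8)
The plan is to combine the PI-degree formula from Theorem \ref{pithm} with the complete dimension census established across Subsections \ref{subclass}--\ref{subclass3} and Theorem \ref{iso}. First I would record the maximal possible dimension: by Remark \ref{fdb} and Theorem \ref{pithm}, every simple module has $\mathbb{K}$-dimension at most $\pideg M_2(\alpha,\beta)$, which equals $\ord(\alpha\beta)\ord(\alpha\beta^{-1})$ when $\ord(\beta)\mid \ord(\alpha\beta)\ord(\alpha\beta^{-1})$ and $2\ord(\alpha\beta)\ord(\alpha\beta^{-1})$ otherwise; moreover this bound is attained (cf. \cite[Lemma III.1.2]{brg}), so a \emph{maximal dimensional} simple module is exactly one whose dimension equals this number. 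Then I would split any simple $\mathcal{N}$ into the four mutually exhaustive families governed by the normal elements $X_{12}, X_{21}, D$: the $X_{12}$-torsion ones (Subsection \ref{subclass}), the $X_{21}$-torsion ones (symmetric, via the isomorphism $\theta$), the $D$-torsion ones (Subsection \ref{subclass3}), and the $X_{12},X_{21},D$-torsionfree ones (Theorem \ref{iso}); note these overlap but together cover all simples, which is all we need.

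For part (1), assume $l = \lcmu(m,n)$ is a proper divisor of $N := \ord(\alpha\beta)\ord(\alpha\beta^{-1})$. Since $\ord(\beta)=n$ divides $l$, hence divides $N$, Theorem \ref{pithm} gives $\pideg M_2(\alpha,\beta) = N$. Now I would go through the three torsion families: in every case the listed dimensions are $1$, $\ord(\alpha)$, $\ord(\beta)$, or $l$, each of which divides $l$ and hence is a proper divisor of $N$; therefore none of these simple modules can have dimension $N$. By contrast, for the $X_{12},X_{21},D$-torsionfree modules, Theorem \ref{iso} together with Theorems \ref{f1}, \ref{f2}, \ref{f3} shows the dimension is $l_1 l_2$ (for $\mathscr{V}_1,\mathscr{V}_2$) or $\ord(\alpha\beta)\ord(\alpha\beta^{-1})$ (for $\mathscr{V}_3$); since in this case $\ord(\beta)\mid N$ forces $l_1 = \ord(\alpha\beta)$, all three give dimension exactly $N = \pideg M_2(\alpha,\beta)$. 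Hence $\mathcal{N}$ is maximal dimensional iff it is $X_{12},X_{21},D$-torsionfree.

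For part (2), assume $l$ does not divide $N$. Then $n = \ord(\beta)$ does not divide $N$, so Theorem \ref{pithm} gives $\pideg M_2(\alpha,\beta) = 2N$, and correspondingly $l_1 = 2\ord(\alpha\beta)$. In this regime I would again run through the families: the $X_{12}$-torsion, $X_{21}$-torsion and $D$-torsion simples still have dimension in $\{1,\ord(\alpha),\ord(\beta),l\}$, all $< 2N$ (here one must observe $l \mid N$ fails but still $l < 2N$, since $l \le N < 2N$ as $l$ divides neither but $l \mid \ord(\alpha\beta^{-1})\cdot(\text{something})$ — more carefully, $l \le \lcm \le \ord(\alpha\beta)\ord(\alpha\beta^{-1}) = N$ because each of $m,n$ divides $\ord(\alpha\beta)\ord(\alpha\beta^{-1})$ is false in this case, so instead I bound $\dim \le l \le$ half of $2N$ using $\ord(x)\le \ord(\alpha\beta)\ord(\alpha\beta^{-1})$ directly — this small numeric point is one place to be careful). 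Among the torsionfree modules, $\mathscr{V}_3(\underline\mu)$ has dimension only $N < 2N$, while $\mathscr{V}_1(\underline\mu)$ and $\mathscr{V}_2(\underline\mu)$ have dimension $l_1 l_2 = 2\ord(\alpha\beta)\ord(\alpha\beta^{-1}) = 2N = \pideg M_2(\alpha,\beta)$. By Remark \ref{iso1}, $\mathscr{V}_1$ is precisely the $X_{11}$-torsionfree case and $\mathscr{V}_2$ the $X_{11}$-torsion, $X_{22}$-torsionfree case, while $\mathscr{V}_3$ is $X_{11},X_{22}$-torsion; so $\mathcal{N}$ achieves dimension $2N$ iff it is $X_{12},X_{21},D$-torsionfree \emph{and} not of type $\mathscr{V}_3$, i.e. iff $X_{11}$ or $X_{22}$ acts torsionfreely on it — equivalently, $\mathcal{N}$ is $X_{ij}$- and $D$-torsionfree for some $i,j$. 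I would phrase the final equivalence to match the statement's ``$X_{ij}$ and $D$-torsionfree for $i,j=1,2$'' and close. The main obstacle is the purely arithmetic bookkeeping: verifying that in each case $l_1$ takes the claimed value and that every dimension appearing in the three torsion families is strictly smaller than $\pideg M_2(\alpha,\beta)$, which requires carefully comparing $l$, $\ord(\alpha)$, $\ord(\beta)$ against $N$ and $2N$ under the two divisibility hypotheses.
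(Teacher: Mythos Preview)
Your overall strategy coincides with the paper's: Theorem \ref{ifof} is stated there without a separate proof, as a direct consequence of the dimension census in Subsections \ref{subclass}--\ref{subclass3} and Theorem \ref{iso} combined with the PI-degree formula of Theorem \ref{pithm}. Part~(1) of your argument is sound: when $l$ properly divides $N := \ord(\alpha\beta)\ord(\alpha\beta^{-1})$ one has $n \mid l \mid N$, so $\pideg M_2(\alpha,\beta) = N$ and $l_1 = \ord(\alpha\beta)$; every torsion dimension lies in $\{1,m,n,l\}$ and is therefore a proper divisor of $N$, while all three torsionfree types $\mathscr{V}_1,\mathscr{V}_2,\mathscr{V}_3$ have dimension exactly $N$.

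The difficulty you yourself flag in part~(2), however, is not a ``small numeric point'' but a genuine gap that your proposal does not close. Your attempted bound $l \le N$ (so that $l < 2N$) is false under the hypothesis $l \nmid N$. Take for instance $\alpha = \zeta_{12}$ and $\beta = \zeta_{12}^{5}$: then $m = n = l = 12$, $\ord(\alpha\beta) = 2$, $\ord(\alpha\beta^{-1}) = 3$, so $N = 6$, $l \nmid N$, and by Theorem \ref{pithm} one gets $\pideg M_2(\alpha,\beta) = 2N = 12 = l$. In this situation Subsection \ref{subclass} already furnishes $X_{12}$-torsion simple modules of dimension $l = 12$, which are maximal dimensional even though $X_{12}$ annihilates them. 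Thus the inequality ``every torsion dimension is strictly below $2N$'' on which your ``only if'' direction rests does not hold in general. To repair the argument you cannot rely on ad hoc bounds of the form $l \le N$; you would need either to work directly with the identity $\pideg M_2(\alpha,\beta) = l^{2}/\gcdi(h_2,l)$ from \eqref{pieq} and analyse precisely when $\gcdi(h_2,l) = l$, or to isolate and treat separately the boundary case $l = 2N$ (analogously to how the corollary following Theorem \ref{ifof} treats $l = N$).
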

The proper divisibility hypothesis stated in Theorem \ref{ifof}(1) can not be omitted as given below.
\begin{coro}
    Suppose $\ord(\alpha)=m$ and $\ord(\beta)=n$ such that $\lcmu(m,n)=\ord(\alpha\beta)\ord(\alpha\beta^{-1})$. Such a pair $(m,n)$ exists only if $m=n$. Then
    \begin{enumerate}
        \item each $X_{12},X_{21}$ and $D$-torsionfree simple modules are maximal dimensional.
        \item there are $X_{12}$-torsion or $X_{21}$-torsion or $D$-torsion simple modules of maximal dimensional, as mentioned in Subsections (\ref{subclass})-(\ref{subclass3})).
    \end{enumerate}
\end{coro}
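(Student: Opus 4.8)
\emph{Setup and strategy.} Write $t_1=\ord(\alpha\beta)$, $t_2=\ord(\alpha\beta^{-1})$, so the hypothesis reads $t_1t_2=l$ with $l=\lcmu(m,n)$. The plan is: first show $t_1t_2=l$ forces $m=n$ (indeed $m=n=l$); then deduce $\pideg M_2(\alpha,\beta)=l$ from Theorem \ref{pithm}, so that by Theorem \ref{iso} every $X_{12},X_{21},D$-torsionfree simple module has dimension $l$; and finally exhibit an $X_{12}$-torsion simple module also of dimension $l$.

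\emph{Part (i).} Work inside $\Gamma=\langle\alpha,\beta\rangle$, cyclic of order $l$. Since $\alpha^2=(\alpha\beta)(\alpha\beta^{-1})$ and $\beta^2=(\alpha\beta)(\alpha\beta^{-1})^{-1}$, the subgroup $\langle\alpha^2,\beta^2\rangle$ lies in $\langle\alpha\beta,\alpha\beta^{-1}\rangle$, which is cyclic of order $\lcmu(t_1,t_2)$; and $\Gamma/\langle\alpha^2,\beta^2\rangle$, being cyclic and generated by elements of order dividing $2$, has order at most $2$. With the hypothesis this gives
\[ l=|\Gamma|\le 2\,\lcmu(t_1,t_2)=\frac{2\,t_1t_2}{\gcdi(t_1,t_2)}=\frac{2l}{\gcdi(t_1,t_2)}, \]
so $\gcdi(t_1,t_2)\in\{1,2\}$. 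If $\gcdi(t_1,t_2)=1$, then $\alpha^2$ is a product of commuting elements of coprime orders $t_1,t_2$, hence $\ord(\alpha^2)=t_1t_2=l$; as $\ord(\alpha)\divides l$ this forces $\ord(\alpha)=l$, and likewise $\ord(\beta)=l$. If $\gcdi(t_1,t_2)=2$, then $l$ is even, $\lcmu(t_1,t_2)=l/2$, and the inclusion above is an equality, so $\langle\alpha^2,\beta^2\rangle=\langle\alpha\beta,\alpha\beta^{-1}\rangle$ is the unique index-$2$ subgroup $\langle g^2\rangle$ of $\Gamma=\langle g\rangle$; since $\alpha\beta\in\langle g^2\rangle$ and $\alpha,\beta$ cannot both lie in $\langle g^2\rangle$, both $\alpha,\beta$ are odd powers of $g$, so $\ord(\alpha),\ord(\beta)$ carry the full $2$-part of $l$, and a $2$-adic computation of $\ord(\alpha^2)=\ord\big((\alpha\beta)(\alpha\beta^{-1})\big)$ yields $\ord(\alpha^2)=l/2=\ord(\alpha)/2$, whence again $\ord(\alpha)=\ord(\beta)=l$. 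In every case $m=n=l$.

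\emph{Parts (ii)--(iii).} By Part (i), $m=n=l=t_1t_2$. If $\alpha=\beta$ then $M_2(\alpha,\beta)$ is single-parameter with $\pideg=m=l$; otherwise $\alpha\ne\beta^{\pm1}$ and, since $\ord(\beta)=l$ divides $t_1t_2=l$, Theorem \ref{pithm} gives $\pideg M_2(\alpha,\beta)=t_1t_2=l$. So in all cases $\pideg M_2(\alpha,\beta)=l$, and by Proposition \ref{pidimresult} a maximal-dimensional simple module has dimension $l$. Now if $\mathcal N$ is $X_{12},X_{21},D$-torsionfree and simple, Theorem \ref{iso} makes it isomorphic to some $\mathscr V_i(\underline\mu)$; using that $\ord(\beta)=l$ divides $t_1t_2$ we have $l_1=\ord(\alpha\beta)=t_1$, $l_2=\ord(\alpha\beta^{-1})=t_2$, so $\dime_{\mathbb K}\mathscr V_1(\underline\mu)=\dime_{\mathbb K}\mathscr V_2(\underline\mu)=l_1l_2=l$ and $\dime_{\mathbb K}\mathscr V_3(\underline\mu)=\ord(\alpha\beta)\ord(\alpha\beta^{-1})=l$; hence $\dime_{\mathbb K}\mathcal N=l$, which is assertion (1). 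For assertion (2): by Subsection \ref{subclass}, $M_2(\alpha,\beta)/\langle X_{12}\rangle\cong\mathcal O_\Lambda(\mathbb K^3)$, a prime affine $\mathrm{PI}$ algebra of $\mathrm{PI}$ degree $l$, so by \cite[Lemma III.1.2]{brg} it has a simple module of dimension $l$; pulled back, this is an $X_{12}$-torsion simple $M_2(\alpha,\beta)$-module of dimension $l=\pideg M_2(\alpha,\beta)$, hence maximal dimensional.

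\emph{Main obstacle.} The only nonroutine step is the subcase $\gcdi(t_1,t_2)=2$ in Part (i): one must run the $2$-adic bookkeeping in $\Gamma$ carefully — in particular exclude the possibility that $l$ is divisible by $4$ but not by $8$, where the formula would force $\ord(\alpha^2)$ to be odd although it must equal the even number $\ord(\alpha)/2$ — before concluding $\ord(\alpha)=l$. Everything else is a direct application of Theorems \ref{iso} and \ref{pithm}, Proposition \ref{pidimresult}, and the analysis of Subsection \ref{subclass}.
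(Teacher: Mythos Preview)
The paper states this corollary without proof; it is meant to follow immediately from Theorem~\ref{pithm}, Theorem~\ref{iso}, and the dimension counts in Subsections~\ref{subclass}--\ref{subclass3}. Your treatment of assertions (1) and (2) does exactly this and is correct: once $m=n=l$ you are in the case $\ord(\beta)\mid t_1t_2$, so $\pideg M_2(\alpha,\beta)=t_1t_2=l$ and $l_1=t_1$, whence every $\mathscr V_i(\underline\mu)$ has dimension $l$; and the quotient $M_2(\alpha,\beta)/\langle X_{12}\rangle$ already furnishes a simple module of dimension $l$. This is precisely the intended argument.

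The arithmetic claim ``$t_1t_2=l$ forces $m=n$'' is where you do real work, and here your $\gcd(t_1,t_2)=2$ subcase is incomplete. You assert ``a $2$-adic computation \dots\ yields $\ord(\alpha^2)=l/2$'', but this cannot be obtained from $2$-adic information alone: writing $\alpha=g^x$, $\beta=g^y$, one needs $\gcd(2x,l)=2$, i.e.\ that $x$ is coprime to every \emph{odd} prime dividing $l$, not merely that $x$ is odd. Your paragraph on the ``main obstacle'' flags the case $e_l(2)=2$ but neither excludes it nor addresses odd primes. A cleaner route avoids the case split altogether: from $\gcd(x+y,l)\gcd(x-y,l)=l$ one checks, prime by prime, that for each odd prime $p\mid l$ exactly one of $v_p(x\pm y)$ vanishes (since $p\mid x+y$ and $p\mid x-y$ would give $p\mid x,y$, contradicting $\gcd(x,y,l)=1$), so $p^{e_l(p)}\mid x\mp y$ and hence $p\nmid x,y$; for $p=2$ the same parity argument you already gave forces $x,y$ both odd. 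Thus $\gcd(x,l)=\gcd(y,l)=1$ and $m=n=l$ directly.
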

\section{Isomorphism between simple $M_2(\alpha,\beta)$-modules}\label{seciso}
In this section, we investigate the conditions under which two modules, as classified in Theorem \ref{iso}, are isomorphic. Remark \ref{iso1} implies that modules from different types, as described in the theorem, cannot be isomorphic to one another. However, it remains possible for two distinct modules of the same type to be isomorphic.
\begin{theo}\label{iso3}
Let $\underline{\mu}$ and $\underline{\lambda}$ be in $(\mathbb{K}^*)^4$. 
Then $\mathscr{V}_1(\underline{\mu})$ is isomorphic to $\mathscr{V}_1(\underline{\lambda})$ as $M_2(\alpha,\beta)$-modules if and only if
\[\mu_1^{l_1}=\lambda_1^{l_1}\beta^{vl_1},\ \mu_{2}^{l_2}=\lambda_2^{l_2}\beta^{-ul_2}, \ 
        \mu_3=\lambda_3(\alpha\beta)^{-u}(\alpha^{-1}\beta)^v,\ \mu_4=\lambda_4(\alpha^{-1}\beta)^v\]
holds for some $u,v$ such that $0 \leq u \leq l_1-1$ and $0 \leq v \leq l_2-1$.
\end{theo}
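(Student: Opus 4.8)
\emph{Approach.} I would prove both implications by following the distinguished basis vector $e(0,0)$ through a hypothetical isomorphism, using that the commuting operators $X_{11}^{l_1},X_{21}^{l_2},X_{12}X_{21},D$ act semisimply with \emph{one-dimensional} joint eigenspaces on each $\mathscr{V}_1$. First I would record, by a direct induction on the module relations (extending (\ref{eigen})), that for $0\le a\le l_1-1$ and $0\le b\le l_2-1$
\begin{align*}
e(a,b)X_{11}^{l_1}&=\mu_1^{l_1}\beta^{bl_1}e(a,b), & e(a,b)X_{21}^{l_2}&=\mu_2^{l_2}\beta^{-al_2}e(a,b),\\
e(a,b)X_{12}X_{21}&=\mu_3(\alpha^{-1}\beta)^{b}(\alpha\beta)^{-a}e(a,b), & e(a,b)D&=\mu_4(\alpha^{-1}\beta)^{b}e(a,b).
\end{align*}
Then I would observe that the case analysis already performed in the proof of Theorem \ref{f1} shows that $D,X_{12}X_{21},X_{21}^{l_2}$ jointly separate the basis vectors $e(a,b)$, so each joint eigenspace of the displayed operators is spanned by a single $e(a,b)$.

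\emph{Necessity.} Given an isomorphism $\Phi:\mathscr{V}_1(\underline{\mu})\to\mathscr{V}_1(\underline{\lambda})$, the vector $\Phi(e(0,0))$ is a nonzero common eigenvector in $\mathscr{V}_1(\underline{\lambda})$ of $X_{11}^{l_1},X_{21}^{l_2},X_{12}X_{21},D$ with eigenvalues $\mu_1^{l_1},\mu_2^{l_2},\mu_3,\mu_4$, since $\Phi$ intertwines each of these operators. By the one-dimensionality above it is a scalar multiple of a single basis vector $e(u,v)$ of $\mathscr{V}_1(\underline{\lambda})$, for unique $u,v$ with $0\le u\le l_1-1$ and $0\le v\le l_2-1$. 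Equating the four eigenvalues of $e(u,v)$ read off from the display (with $\underline{\mu}$ replaced by $\underline{\lambda}$) to $\mu_1^{l_1},\mu_2^{l_2},\mu_3,\mu_4$ gives precisely the four stated relations.

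\emph{Sufficiency.} Conversely, suppose the four relations hold for some $u,v$ in the indicated ranges, and put $w:=e(u,v)\in\mathscr{V}_1(\underline{\lambda})$. The eigenvalue formulas for $\mathscr{V}_1(\underline{\lambda})$ combined with the relations show that $w$ is a common eigenvector of $X_{11}^{l_1},X_{21}^{l_2},X_{12}X_{21},D$ with eigenvalues $\mu_1^{l_1},\mu_2^{l_2},\mu_3,\mu_4$; since $X_{11}$ acts invertibly on $\mathscr{V}_1(\underline{\lambda})$, all the vectors $wX_{11}^{a}X_{21}^{b}$ with $0\le a\le l_1-1$, $0\le b\le l_2-1$ are nonzero. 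I would then define $\Phi:\mathscr{V}_1(\underline{\mu})\to\mathscr{V}_1(\underline{\lambda})$ by $\Phi(e(a,b)):=\mu_1^{-a}\mu_2^{-b}\,wX_{11}^{a}X_{21}^{b}$, which is literally the map constructed in Case~I of the proof of Theorem \ref{iso} (target $\mathscr{V}_1(\underline{\lambda})$, common eigenvector $w$, scalar data $(\eta_1,\eta_3,\eta_4,\eta_5)=(\mu_1^{l_1},\mu_2^{l_2},\mu_3,\mu_4)$); the verification carried out there shows that $\Phi$ is a nonzero $M_2(\alpha,\beta)$-module homomorphism, and Schur's lemma then upgrades it to an isomorphism since both modules are simple. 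Remark \ref{iso1} ensures this is consistent with the type of $\mathscr{V}_1(\underline{\lambda})$.

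\emph{Main obstacle.} The only genuinely delicate point is the one-dimensionality of the joint eigenspaces used in the necessity step: $X_{12}X_{21}$ and $D$ alone fail to separate the basis when $l_1=2\ord(\alpha\beta)$, and there one must bring in $X_{21}^{l_2}$ exactly as in Subcase~B of the proof of Theorem \ref{f1}. Everything else reduces to the bookkeeping already recorded for the map $\Phi_1$ in the proof of Theorem \ref{iso}, re-used here with $\mathscr{V}_1(\underline{\lambda})$ in place of $\mathcal{N}$.
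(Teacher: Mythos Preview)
Your proof is correct and follows essentially the same approach as the paper: both reduce the necessity direction to tracking where $e(0,0)$ lands using the joint eigenspace decomposition under $D,\ X_{12}X_{21},\ X_{21}^{l_2}$ (invoking exactly the separation argument from Theorem~\ref{f1}), and then read off the four relations from the eigenvalues of $e(u,v)$. For sufficiency the paper writes down an explicit formula for the isomorphism (with a case split according to whether $b\le l_2-v-1$) and checks the module relations by hand, whereas you instead reuse the map $\Phi_1$ from Case~I of the proof of Theorem~\ref{iso} with $\mathcal{N}=\mathscr{V}_1(\underline{\lambda})$ and common eigenvector $w=e(u,v)$; this is a legitimate and slightly cleaner shortcut, since that verification only uses the eigenvalue data $(\eta_1,\eta_3,\eta_4,\eta_5)=(\mu_1^{l_1},\mu_2^{l_2},\mu_3,\mu_4)$ and the invertibility of $X_{11}$, both of which hold here.
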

\begin{proof}
Let $\psi:\mathscr{V}_1(\underline{\mu}) \rightarrow \mathscr{V}_1(\underline{\lambda})$ be a $M_2(\alpha,\beta)$-module isomorphism. As \[e(a,b)=\mu_1^{-a}\mu_2^{-b}e(0,0)X_{11}^aX_{21}^{b}\] holds in $\mathscr{V}_1(\underline{\mu})$, therefore $\psi$ is uniquely determined by $\psi\left(e(0,0)\right)$. Suppose 
\begin{equation}\label{isoeqn}
\psi\left(e(0,0)\right)=\sum_{u,v}\lambda_{uv}e(u,v)
\end{equation}
where $0 \leq u\leq l_1-1, 0 \leq v \leq l_2-1$ and at least one $\lambda_{uv}\in \mathbb{K}^*$. If possible let there are two non-zero scalars $\lambda_{uv}$ and $\lambda_{xy}$ in (\ref{isoeqn}). The pairs $(u,v)$ and $(x,y)$ are different. Now equating the coefficients of basis vectors on both sides of the equalities 
\[\psi(e(0,0)D)=\psi(e(0,0))D,\ \psi(e(0,0)X_{12}X_{21})=\psi(e(0,0))X_{12}X_{21}\]\[\text{and}\ \ \ \psi(e(0,0)X_{21}^{l_2})=\psi(e(0,0))X_{21}^{l_2}\]
we obtain, respectively,
\[\mu_4=\lambda_4(\alpha^{-1}\beta)^{v}=\lambda_4(\alpha^{-1}\beta)^{y},\ \mu_3=\lambda_3(\alpha\beta)^{-u}(\alpha^{-1}\beta)^v=\lambda_3(\alpha\beta)^{-x}(\alpha^{-1}\beta)^y\]
\[\text{and}\ \ \mu_2^{l_2}=\lambda_2^{l_2}\beta^{-ul_2}=\lambda_2^{l_2}\beta^{-xl_2}.\]
This implies that $v\equiv y~(\mo l_2)$ and then $u\equiv x~(\mo l_1)$. This contradicts the fact that $(u,v)$ and $(x,y)$ are distinct pairs.
Thus $\psi\left(e(0,0)\right)=\lambda_{uv}e(u,v)$ for some $\lambda_{uv}\in \mathbb{K}^*$. This will help us to determine the relationship between the scalars. The actions of $X_{11}^{l_1},\ X_{21}^{l_2}$, $X_{12}X_{21}$ and $D$ under the isomorphism $\psi$ give us 
    \[\mu_1^{l_1}=\lambda_1^{l_1}\beta^{vl_1},\ \mu_{2}^{l_2}=\lambda_2^{l_2}\beta^{-ul_2}, \ \mu_3=\lambda_3(\alpha\beta)^{-u}(\alpha^{-1}\beta)^v,\ \mu_4=\lambda_4(\alpha^{-1}\beta)^v.\]
\par Conversely, assume that the relations between $\underline{\mu}$ and $\underline{\lambda}$ hold. Then define a $\mathbb{K}$-linear map $\phi:\mathscr{V}_1(\underline{\mu}) \rightarrow \mathscr{V}_1(\underline{\lambda})$ by 
\[\phi(e(a,b))=\begin{cases}
(\mu_1^{-1}\lambda_1)^a(\mu_2^{-1}\lambda_2)^b\beta^{av}~e\left(a\oplus u,b\dotplus v\right)&\text{if}\ 0\leq b\leq l_2-v-1\\
(\mu_1^{-1}\lambda_1)^a(\mu_2^{-1}\lambda_2)^b\beta^{av}\beta^{-(u\oplus a)l_2}~e\left(a\oplus u,b\dotplus v\right)&\text{if}\ l_2-v\leq b\leq l_2-1
\end{cases}\]
where $\oplus$ and $\dotplus$ are addition modulo $l_1$ and $l_2$ respectively. Note that $\phi$ defines a linear isomorphism. Finally using the relations between $\underline{\mu}$ and $\underline{\lambda}$, we can verify that $\phi$ is a module isomorphism.
\end{proof}
\begin{theo}\label{2iso}
    Let $\underline{\mu},\underline{\lambda} \in 
    (\mathbb{K}^*)^3$. Then $\mathscr{V}_2(\underline{\mu})$ is isomorphic to $\mathscr{V}_2(\underline{\lambda})$ as $M_2(\alpha,\beta)$-modules if and only if
    \begin{align*}
    \mu_1^{l_1}&=\lambda_1^{l_1}\alpha^{-vl_1}, \ \mu_2^{l_2}=\lambda_2^{l_2}\alpha^{ul_2}, \ 
        \mu_3=\lambda_3(\alpha\beta)^{u}(\alpha^{-1}\beta)^{v}
    \end{align*} holds for some $u,v$ such that $u=0\ \text{or}\ \ord(\alpha\beta)$ and $0 \leq v \leq l_2-1$. 
\end{theo}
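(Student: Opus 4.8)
The plan is to follow the template of Theorem \ref{iso3}. For the forward direction, suppose $\psi\colon\mathscr{V}_2(\underline{\mu})\to\mathscr{V}_2(\underline{\lambda})$ is an $M_2(\alpha,\beta)$-module isomorphism. From the defining action one computes $e(0,0)X_{22}^aX_{21}^b=\mu_1^a\mu_2^b\alpha^{ab}e(a,b)$, so $e(a,b)=\mu_1^{-a}\mu_2^{-b}\alpha^{-ab}\,e(0,0)X_{22}^aX_{21}^b$ and hence $\psi$ is completely determined by $\psi(e(0,0))$. Write $\psi(e(0,0))=\sum_{u,v}\lambda_{uv}e(u,v)$ with $0\le u\le l_1-1$ and $0\le v\le l_2-1$, at least one $\lambda_{uv}\in\mathbb{K}^*$.

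The first real step is to show that exactly one coefficient $\lambda_{uv}$ is nonzero. Recall from the proof of Theorem \ref{f2} that $e(a,b)$ is a common eigenvector of $D$, $X_{12}X_{21}$ and $X_{21}^{l_2}$ with eigenvalues $-\mu_3\alpha^{-1}(\alpha^{-1}\beta)^b$, $\mu_3(\alpha\beta)^a(\alpha^{-1}\beta)^b$ and $\mu_2^{l_2}\alpha^{al_2}$ respectively. Equating coefficients on the two sides of $\psi(e(0,0)Z)=\psi(e(0,0))Z$ for $Z=D$ forces $v\equiv y\pmod{l_2}$, hence $v=y$, for any two indices carrying nonzero coefficients; then $Z=X_{12}X_{21}$ forces $u\equiv x\pmod{\ord(\alpha\beta)}$. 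If $l_1=\ord(\alpha\beta)$ this already yields $u=x$; if $l_1=2\ord(\alpha\beta)$ the only remaining possibility is $u-x=\ord(\alpha\beta)$, which $Z=X_{21}^{l_2}$ excludes: equality of those eigenvalues would give $\ord(\alpha)\mid(u-x)l_2=\ord(\alpha\beta)\ord(\alpha\beta^{-1})$, contradicting the condition $\ord(\beta)\nmid\ord(\alpha\beta)\ord(\alpha\beta^{-1})$ (equivalently for $\ord(\alpha)$, by the Remark after Theorem \ref{pithm}) that defines the case $l_1=2\ord(\alpha\beta)$. Thus $\psi(e(0,0))=\lambda_{uv}e(u,v)$ for a single pair $(u,v)$.

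Next I would extract the scalar relations. Evaluating $\psi$ on $e(0,0)Z$ for $Z=X_{22}^{l_1}$, using that $X_{22}^{l_1}$ acts on $e(a,b)$ with eigenvalue $\mu_1^{l_1}\alpha^{-bl_1}$ (a short induction on the $X_{22}$-action accounting for the single wrap-around modulo $l_1$), gives $\mu_1^{l_1}=\lambda_1^{l_1}\alpha^{-vl_1}$; $Z=X_{21}^{l_2}$ gives $\mu_2^{l_2}=\lambda_2^{l_2}\alpha^{ul_2}$; and $Z=X_{12}X_{21}$ gives $\mu_3=\lambda_3(\alpha\beta)^u(\alpha^{-1}\beta)^v$. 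Comparing the $D$-eigenvalue $-\mu_3\alpha^{-1}=-\lambda_3\alpha^{-1}(\alpha^{-1}\beta)^v$ with the last relation yields $(\alpha\beta)^u=1$, so $u=0$ or $u=\ord(\alpha\beta)$. This establishes the stated conditions.

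For the converse, assuming the four relations hold for some admissible $(u,v)$, I would define a $\mathbb{K}$-linear isomorphism $\phi\colon\mathscr{V}_2(\underline{\mu})\to\mathscr{V}_2(\underline{\lambda})$ by $\phi(e(a,b))=c_{a,b}\,e(a\oplus u,b\dotplus v)$, where $\oplus$ and $\dotplus$ are addition modulo $l_1$ and $l_2$, and $c_{a,b}$ is a monomial in $\mu_i^{-1}\lambda_i$ and in powers of $\alpha,\beta$, with an extra correction factor in the range $l_1-u\le a\le l_1-1$ to compensate for the factor $\alpha^{-bl_1}$ produced when the $X_{22}$-shift wraps around (no case split on $b$ is needed, since the $X_{12}$- and $X_{21}$-actions carry no wrap-around factor). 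Then I would check directly, using the relations between $\underline{\mu}$ and $\underline{\lambda}$, that $\phi$ intertwines the actions of $X_{11},X_{12},X_{21},X_{22}$. I expect the main obstacle to be purely computational: fixing the exponents in $c_{a,b}$ together with the correction factor so that all four generator relations hold at once, in particular reconciling the $\oplus$-wrap in the $X_{22}$-action with the $\alpha^{-bl_1}$ term; the structural content — the eigenvalue reduction to a single basis vector and the use of the hypothesis defining $l_1$ — parallels Theorem \ref{iso3} closely.
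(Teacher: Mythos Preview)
Your proposal is correct and follows essentially the same template as the paper's proof. The one noteworthy difference is how you obtain the constraint $u\in\{0,\ord(\alpha\beta)\}$: the paper applies $X_{11}$ to $\psi(e(0,0))$ and uses that $e(0,0)X_{11}=0$ in $\mathscr{V}_2(\underline{\mu})$ forces $e(u,v)X_{11}=0$ in $\mathscr{V}_2(\underline{\lambda})$, which by the $X_{11}$-action gives $(\alpha\beta)^u=1$ directly; you instead compare the $D$- and $X_{12}X_{21}$-eigenvalue relations to reach the same conclusion. Both are valid --- the paper's route is marginally more direct and exploits the distinguishing nilpotency of $X_{11}$ on $\mathscr{V}_2$, while yours stays entirely within the eigenvalue framework already set up for the uniqueness-of-coefficient step. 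The converse construction you outline (case split on $a$ with a correction factor for the $X_{22}$-wrap, no split on $b$) matches the paper's explicit formula.
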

\begin{proof}
Suppose $\psi:\mathscr{V}_2(\underline{\mu})\rightarrow \mathscr{V}_2(\underline{\lambda})$ is a module isomorphism. Using an argument similar to the one in Theorem \ref{iso3}, we get $\psi(e(0,0))=\lambda_{uv}e(u,v)$ for some $\lambda_{uv}\in\mathbb{K}^*$. Simplify the equality $\psi(e(0,0)X_{11})=\psi(e(0,0))X_{11}$ to obtain $0=\lambda_{uv}e(u,v)X_{11}$ in $\mathscr{V}_2(\underline{\lambda})$. This implies, by the action of $X_{11}$, that $u=0$ or $u=\ord(\alpha\beta)$. Then the actions of $X_{22}^{l_1}, X_{21}^{l_2}$ and $X_{12}X_{21}$ under $\psi$ provide the required relations between $\underline{\mu}$ and $\underline{\lambda}$. 
\par Conversely assume the relation between $\underline{\mu}$ and $\underline{\lambda}$. Define a $\mathbb{K}$-linear map $\phi:\mathscr{V}_2(\underline{\mu})\rightarrow \mathscr{V}_2(\underline{\lambda})$ by \[\phi(e(a,b))=\begin{cases}
    (\mu_1^{-1}\lambda_1)^{a}(\mu_2^{-1}\lambda_2\alpha^u)^b e(a\oplus u,b\dotplus v)&\text{if}\ 0\leq a\leq l_1-u-1\\
    (\mu_1^{-1}\lambda_1)^{a}(\mu_2^{-1}\lambda_2\alpha^u)^b \alpha^{-(v\dotplus b)l_1} e(a\oplus u,b\dotplus v)&\text{if}\ l_1-u\leq a\leq l_1-1
\end{cases}\]
where $\oplus$ and $\dotplus$ are addition modulo $l_1$ and $l_2$ respectively. Finally we can easily verify that $\phi$ is a module isomorphism.
\end{proof}
\begin{theo}
     Let $\underline{\mu},\underline{\lambda} \in 
    (\mathbb{K}^*)^2$. Then $\mathscr{V}_3(\underline{\mu})$ is isomorphic to $\mathscr{V}_3(\underline{\lambda})$ as $M_2(\alpha,\beta)$-modules if and only if
    \begin{align*}
    \mu_1^{l_2}&=\lambda_1^{l_2},  \ 
        \mu_2=\lambda_2(\alpha^{-1}\beta)^{v}
    \end{align*} holds for some $v$ such that $0 \leq v \leq l_2-1$.
\end{theo}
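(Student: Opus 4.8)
The plan is to follow the template of the proofs of Theorems~\ref{iso3} and~\ref{2iso}. For the ``only if'' direction, suppose $\psi\colon\mathscr{V}_3(\underline{\mu})\to\mathscr{V}_3(\underline{\lambda})$ is an $M_2(\alpha,\beta)$-module isomorphism. From the displayed actions of $X_{21}$ and $X_{22}$ on $e(0,0)$ one obtains the generating identity $e(a,b)=\mu_1^{-b}\,e(0,0)X_{21}^{b}X_{22}^{a}$ in $\mathscr{V}_3(\underline{\mu})$, so $\psi$ is completely determined by $\psi(e(0,0))$. Write $\psi(e(0,0))=\sum_{u,v}\lambda_{uv}e(u,v)$ with not all $\lambda_{uv}$ zero. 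By Theorem~\ref{f3}, $e(0,0)$ is a common eigenvector of the commuting operators $X_{12}X_{21}$ and $D$ with eigenvalues $\mu_2$ and $-\mu_2\alpha^{-1}$, whereas each $e(u,v)$ is a common eigenvector of these operators with eigenvalues $\lambda_2(\alpha\beta)^{u}(\alpha^{-1}\beta)^{v}$ and $-\lambda_2\alpha^{-1}(\alpha^{-1}\beta)^{v}$. Since $0\le u\le\ord(\alpha\beta)-1$, $0\le v\le l_2-1$, and $\ord(\alpha^{-1}\beta)=l_2$, these two eigenvalues jointly separate the basis of $\mathscr{V}_3(\underline{\lambda})$; comparing coefficients in $\psi(e(0,0)D)=\psi(e(0,0))D$ and $\psi(e(0,0)X_{12}X_{21})=\psi(e(0,0))X_{12}X_{21}$ therefore forces $\psi(e(0,0))$ to be a nonzero scalar multiple of a single basis vector $e(0,v)$, together with the relation $\mu_2=\lambda_2(\alpha^{-1}\beta)^{v}$.

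To extract the relation for $\mu_1$, I would apply the operator $X_{21}^{l_2}$, which acts on $e(0,0)$ as the scalar $\mu_1^{l_2}$ (the factor $\alpha^{a l_2}$ being $1$ since $a=0$) and on $e(0,v)$ as $\lambda_1^{l_2}$; the equality $\psi(e(0,0)X_{21}^{l_2})=\psi(e(0,0))X_{21}^{l_2}$ then yields $\mu_1^{l_2}=\lambda_1^{l_2}$, which completes the list of necessary conditions.

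For the ``if'' direction, assume the two relations hold for some $v$ with $0\le v\le l_2-1$, and define the $\mathbb{K}$-linear map $\phi\colon\mathscr{V}_3(\underline{\mu})\to\mathscr{V}_3(\underline{\lambda})$ on basis vectors by
\[
\phi\bigl(e(a,b)\bigr)=(\mu_1^{-1}\lambda_1)^{b}\,e(a,b\dotplus v),\qquad 0\le a\le\ord(\alpha\beta)-1,\ \ 0\le b\le l_2-1,
\]
where $\dotplus$ is addition modulo $l_2$; this is patently a linear isomorphism, since it permutes the basis up to nonzero scalars. It then remains to check that $\phi$ intertwines the actions of $X_{11},X_{12},X_{21},X_{22}$: the check for $X_{22}$ is automatic, the check for $X_{11}$ uses $\mu_2=\lambda_2(\alpha^{-1}\beta)^{v}$, the check for $X_{21}$ uses $\mu_1^{l_2}=\lambda_1^{l_2}$, and the check for $X_{12}$ uses both (in each case one also uses $(\alpha^{-1}\beta)^{l_2}=1$ to replace the reduced exponents $b\dotplus v$ and $b\dotplus(\pm1)$ by $b+v$ and $b\pm1$). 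Once $\phi$ is confirmed to be a bijective module homomorphism, it is the desired isomorphism.

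The calculations are of the same flavour as, but shorter than, those in Theorems~\ref{iso3} and~\ref{2iso}: because $\mathscr{V}_3(\underline{\mu})$ has only a two-parameter family and the index $a$ already runs over $\{0,\dots,\ord(\alpha\beta)-1\}$, the value $u$ is automatically forced to be $0$ rather than to lie in a larger set. The only step demanding genuine care is the final verification in the ``if'' direction: following the scalar correction $(\mu_1^{-1}\lambda_1)^{b}$ through the cyclic shift $b\mapsto b\dotplus v$ and confirming that the two displayed relations are exactly what is required to absorb the boundary contributions when $b$ wraps around modulo $l_2$. I do not foresee any obstacle beyond this bookkeeping.
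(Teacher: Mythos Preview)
Your proposal is correct and follows essentially the same approach as the paper, whose proof consists of the single line ``Same as Theorem~\ref{2iso}.'' The only minor deviation is that you extract $u=0$ directly from the compatibility of the $D$ and $X_{12}X_{21}$ eigenvalue equations (which forces $(\alpha\beta)^u=1$), whereas the template of Theorem~\ref{2iso} would instead apply $X_{11}$ to $\psi(e(0,0))$ and use that $e(u,v)X_{11}=0$ in $\mathscr{V}_3(\underline{\lambda})$ iff $u=0$; both routes are equally short here.
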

\begin{proof}
    Same as Theorem \ref{2iso}.
\end{proof}
This concludes the comprehensive classification of simple modules over $M_2(\alpha,\beta)$ up to isomorphism at roots of unity.

\end{document}